\documentclass[11pt]{article}

\usepackage{cite}
\usepackage{graphicx,color}
\usepackage[normalem]{ulem}
\usepackage{amsmath,amssymb,amsthm}
\usepackage{ifthen}
\usepackage{mathrsfs}
\usepackage{mathtools}
\usepackage{enumitem}
\usepackage{lineno}
\usepackage{float}
\usepackage{fancyhdr}
\usepackage[us,12hr]{datetime}
\usepackage{exscale}
\usepackage{tabularx}
\usepackage{latexsym}
\usepackage{fullpage}
\usepackage{verbatim}
\usepackage{multirow}
\usepackage{comment}
\usepackage{hyperref}
\usepackage{adjustbox}
\usepackage[noabbrev]{cleveref}
\usepackage{tikz}
\usepackage{pdflscape}
\usepackage{algorithm}
\usepackage{algpseudocode}
\usepackage{authblk}

\newtheorem{lemma}{Lemma}[section]

\newtheorem{remark}{Remark}[section]

\crefname{theorem}{Theorem}{Theorems}
\Crefname{theorem}{Theorem}{Theorems}
\crefname{proposition}{Proposition}{Propositions}
\Crefname{proposition}{Proposition}{Propositions}
\crefname{corollary}{Corollary}{Corollaries}
\Crefname{corollary}{Corollary}{Corollaries}
\crefname{lemma}{Lemma}{Lemmas}
\Crefname{lemma}{Lemma}{Lemmas}
\crefname{remark}{Remark}{Remarks}
\Crefname{remark}{Remark}{Remarks}
\crefname{equation}{}{}
\Crefname{equation}{}{}

\def\eqref#1{{\normalfont(\ref{#1})}}

\DeclareMathOperator{\aff}{aff}
\DeclareMathOperator{\range}{range}

\DeclareMathOperator{\ri}{ri}

\DeclareMathOperator{\conv}{conv}

\DeclareMathOperator{\arrow}{arrow}

\newcommand{\Rnn}{\mathbb{R}^{n}}

\newcommand{\Sn}{\mathbb{S}^{n}}
\newcommand{\Sno}{\mathbb{S}^{n+1}}
\newcommand{\Snop}{\mathbb{S}_{+}^{n+1}}
\newcommand{\Snp}{\mathbb{S}_{+}^{n}}

\newcommand{\rvv}[1]{#1}

\author{%
Hao Hu\thanks{School of Mathematical and Statistical Sciences, Clemson University, Clemson, SC, USA; Email: \url{hhu2@clemson.edu}; Research supported by the Air Force Office of Scientific Research under award number FA9550-23-1-0508.}
\qquad
Mingming Xu\thanks{School of Mathematical and Statistical Sciences, Clemson University, Clemson, SC, USA; Email: \url{mingmix@g.clemson.edu}; Research supported by the Air Force Office of Scientific Research under award number FA9550-23-1-0508.}
}

\begin{document}

\title{A Primal Approach to Facial Reduction for SDP Relaxations of Combinatorial Optimization Problems}
\break
\date{\currenttime, \today
}
\maketitle

\medskip

\begin{abstract}
We propose a novel facial reduction algorithm tailored to semidefinite programming relaxations of combinatorial optimization problems with quadratic objective functions. Our method leverages the specific structure of these relaxations, particularly the availability of feasible solutions that can often be generated efficiently in practice. By incorporating such solutions into the facial reduction process, we substantially simplify the reduction steps. On average, our facial reduction algorithm is four times faster than the standard implementation on the considered benchmark sets, providing significantly improved preprocessing for SDP relaxations in combinatorial optimization.
\end{abstract}

{\bf Key Words:}
semidefinite programming, facial reduction, Slater's condition, primal algorithm,
combinatorial optimization, mixed-integer quadratic programming, quadratic
assignment problem

\section{Introduction}

Combinatorial optimization problems are central to operations research, computer science, and engineering, as they model a wide range of decision-making tasks involving discrete choices, such as routing, scheduling, and resource allocation. Traditionally, many of these problems have been studied with linear objective functions due to their mathematical tractability and the availability of well-established solution methods. However, extending the objective from linear to quadratic allows for modeling more complex interactions between decision variables, capturing dependencies that linear models cannot. 

A prominent example is the Quadratic Assignment Problem (QAP), see, \cite{burkard1984quadratic,finke1987quadratic}, that arises in facility layout planning. In QAP, the cost depends not only on individual assignments but also on the interaction between pairs of facilities and their respective locations. This quadratic formulation provides a more accurate and realistic representation of practical problems, albeit at the cost of increased computational complexity.

Such problems can be formulated as a \emph{mixed-binary quadratic programming (MBQP)} problem. Let \( Q \in \mathbb{R}^{n \times n} \) be a symmetric\footnote{Throughout this paper, we assume $Q$ is symmetric without loss of generality. If $Q$ is not symmetric, it can be replaced by its symmetric part $\frac{1}{2}(Q + Q^\top)$ without altering the objective function value.} cost matrix and \( c \in \mathbb{R}^n \) a linear cost vector. The MBQP is defined as:
\begin{equation}\label{bqp}
	\min \left\{  x^\top Q x + c^{\top}x \mid x \in P \right\},
\end{equation}
where the feasible region \( P \) is a mixed-binary set defined by the following constraints:
\begin{equation}\label{bqpf}
	\begin{array}{rlll}
		a_{i}^{\top}x &=& b_{i} &\text{ for } i = 1, \ldots, p, \\
		a_{i}^{\top}x &\leq& b_{i} &\text{ for } i = p+1, \ldots, m, \\
		x_{i} &\in& \{0,1\} &\text{ for } i = 1, \ldots, r,
	\end{array}
\end{equation}
where \( a_{i} \in \mathbb{R}^n \) and \( b_{i} \in \mathbb{R} \) for \( i = 1, \ldots, m \).

One of the most successful approaches for solving \eqref{bqp} is via \emph{semidefinite programming (SDP)} relaxations, which provide strong convex approximations to otherwise hard combinatorial problems. Unlike traditional linear programming (LP) relaxations, SDP relaxations capture quadratic relationships and global structural properties, often leading to tighter bounds and higher-quality solutions when combined with rounding heuristics or used within exact algorithms. Landmark results, such as the Goemans–Williamson algorithm for Max-Cut \cite{goemans1995improved}, illustrate how SDP can yield approximation guarantees superior to those of LP-based methods. Furthermore, SDP formulations naturally appear in a variety of application domains and have achieved very good computational results, including graph partitioning \cite{wolkowicz1999semidefinite,wiegele2022sdp}, sensor network localization \cite{biswas2006semidefinite,kim2009exploiting,krislock2010explicit}, polynomial optimization \cite{anjos2011handbook,waki2006sums,de2011lasserre,papp2017semi,hu2016note,jibetean2005semidefinite} and quantum information theory \cite{hu2022robust,fawzi2018efficient,fawzi2023optimal,tavakoli2024semidefinite,karimi2025efficient,araujo2023quantum}, further demonstrating their theoretical and practical relevance. The continuing development of efficient SDP solvers and scalable algorithms has made SDP a powerful tool in tackling complex combinatorial optimization problems.

Despite their advantages, SDP relaxations are computationally intensive and highly sensitive to problem regularity, particularly the presence of strict feasibility. In particular, the failure of \emph{Slater’s condition}—the existence of a strictly feasible solution—can severely impair both the theoretical guarantees and numerical performance of SDP relaxations. In the absence of Slater’s condition, the dual problem may exhibit a nonzero duality gap, and primal and dual optimal values may not coincide. This can lead to weaker bounds and unreliable results. Furthermore, numerical solvers may encounter instability or fail to converge. 

A standard remedy is \emph{facial reduction}, a regularization procedure introduced in~\cite{borwein1981regularizing,borwein1981facial}, which iteratively reformulates the SDP to ensure strict feasibility. While facial reduction guarantees regularity, it typically requires solving a sequence of SDP auxiliary problems that may be as computationally demanding as the original problem. To improve practical applicability, several special \emph{facial reduction algorithms} (FRAs) have been developed; see, e.g., \cite{cheung2013preprocessing,friberg2016facial,permenter2018partial,zhu2019sieve,hu2023affine}.

In this paper, we propose a novel FRA specifically designed for SDP relaxations of MBQP problems. Our key insight is that, while MBQPs are generally difficult to solve, it is often straightforward to generate feasible solutions within the mixed-binary feasible set~\( P \) in a structured manner. For instance, although the QAP is well-known for its computational difficulty, its feasible region consists of binary assignment matrices, which are simple to construct. These feasible solutions can be used to generate corresponding feasible matrix solutions for the SDP relaxation, thereby substantially simplifying the facial reduction process. We call the resulting method \emph{primal FRA} to emphasize its key feature---leveraging the primal feasible solutions. The primal FRA is simple to implement and numerically robust in practice. We provide both theoretical and computational results related to the primal FRA in this paper. Numerical experiments demonstrate that our method substantially reduces computation time on nearly all benchmark instances and frequently restores Slater’s condition without the need to solve additional SDP auxiliary problems.

\textbf{Notations.} For any set \( P \), we denote its linear span by \( \operatorname{span}(P) \), its affine hull by \( \operatorname{aff}(P)\), its relative interior by \( \operatorname{ri}(P) \), and its convex hull by \( \operatorname{conv}(P) \). The all-zeros vector and matrix are denoted by \( \mathbf{0} \) and \( \mathbf{O} \), respectively; their dimensions will be clear from the context. If \( K \) is a closed convex cone, its \emph{dual cone} is defined as
$
K^{*} := \left\{ y \;\middle|\; \langle y, x \rangle \geq 0,\; \forall x \in K \right\}.
$
The set of \( n \times n \) symmetric matrices is denoted by \( \mathbb{S}^n \). For \( X, Y \in \mathbb{S}^n \), the trace inner product is defined as \( \langle X, Y \rangle := \operatorname{tr}(XY) \). The set of \( n \times n \) symmetric positive semidefinite matrices is denoted by \( \mathbb{S}^n_{+} \), and the set of symmetric positive definite matrices is denoted by \( \mathbb{S}^n_{++} \). The range space and the null space of a given matrix $M \in \Sn$ are denoted by $\operatorname{range}(M)$ and $\operatorname{null}(M)$, respectively.

\section{Preliminaries}

\subsection{Facial Reduction}
In this section, we review the theory of facial reduction. To illustrate the key idea of facial reduction for general readers, consider a simple example where $K = \mathbb{S}_{+}^{2}$ and $L=\{ X \in \mathbb{S}^{2} \mid X_{22} = 0 \}$.
	The intersection $L \cap \mathbb{S}_{+}^{2}$ consists of positive semidefinite matrices with a zero on the diagonal. Since any $X \in \mathbb{S}_{+}^{2}$ with $X_{ii}=0$ must have the entire $i$-th row and column as zero, the feasible set forces $X_{12} = X_{21} = 0$.
	Consequently, this set does not contain any matrix in the interior of the cone $\mathbb{S}_{+}^{2}$ (where $X$ must be positive definite). This lack of strictly feasible points, known as the failure of Slater's condition, often leads to numerical instability and theoretical difficulties in optimization.
	
	However, we can define a smaller cone: $
	F := \{ X \in \mathbb{S}_{+}^{2} \mid X_{12} = X_{22} = 0 \}.$
	Note that $F$ is a proper \emph{face} of $\mathbb{S}_{+}^{2}$, and importantly, $L \cap \mathbb{S}_{+}^{2} = L \cap F$.
	Unlike the original formulation, the set $L \cap F$ contains points like $[\begin{smallmatrix} 1 & 0 \\ 0 & 0 \end{smallmatrix}]$ which lie in the relative interior of the cone $F$. By replacing the ambient cone $\mathbb{S}_{+}^{2}$ with the smaller face $F$, we obtain a regularized formulation that is more efficient and numerically stable. We call this process facial reduction.
	
Let $K$ be a closed convex cone.  We say $F$ is a \emph{face} of $K$, denoted by $F \unlhd K$, if $x,y \in K$ and $x+y \in F$ imply that $x,y \in F$.  The conjugate face of $F$ is $F^{\triangle} := K^{*} \cap F^{\perp}.$ A face $F$ of $K$ is called \emph{exposed} if it is of the form $F = K \cap w^{\perp}$ for some $w \in K^{*}$. The element $w$ is then called an \emph{exposing vector}. We say $K$ is \emph{exposed} if all of its faces are exposed.   For example, the cone $\Snp$ is exposed; the non-empty faces of $\Snp$ can be characterized by linear subspaces: $F$ is a non-empty face of $\Snp$ if and only if there exists a linear subspace $\mathcal{V} \subseteq \Rnn$ such that
$F =  \left\{ X \in \Snp \mid  \range(X) \subseteq \mathcal{V} \right\}.$

Let $L$ be an affine subspace such that $L \cap K \neq \emptyset$. We say \emph{Slater's condition} holds for $L \cap K$ if $L \cap \ri (K) \neq \emptyset$, i.e., it contains a feasible solution in the relative interior of $K$. Facial reduction exploits the following theorem of the alternative:
$L \cap \ri (K) = \emptyset$ if and only if $L^{\perp}  \cap (K^{*} \setminus K^{\perp}) \neq \emptyset.$\footnote{For an affine subspace $L = \{ X \mid \mathcal{A}(X) = b \}$, we define $L^{\perp} := \{ W \mid \langle W, X \rangle = 0, \forall X \in L \}$. This corresponds to vectors $W = \mathcal{A}^*(y)$ where $b^\top y = 0$.}
Thus, $L \cap K$ fails Slater's condition if and only if there exists $w \in L^{\perp} \cap \Snp \neq \emptyset$. When $K = \Snp$, $w$ is an exposing vector of $\Snp$, and $F = \Snp \cap w^{\perp}$ is a proper face of $\Snp$ containing $L \cap \Snp$. Thus, if $L \cap \Snp$ fails Slater's condition, then we can replace the original feasible set $L \cap \Snp$ with the equivalent set $L \cap F$, where $F$ is a proper face of $\Snp$. This procedure can be iterated for a finite number of times until $L \cap F$ satisfies Slater's condition. We outline FRA in Algorithm~\ref{alg1} below.

\begin{algorithm}[H]
	\caption{Facial Reduction Algorithm (FRA)}\label{alg1}
	\begin{algorithmic}[1]
		\State \textbf{Inputs:} An affine subspace $L$ in $\Sn$ and a face $K$ of $\Snp$.
		\State \textbf{Initialization:} Let $F_0 = K$, $i = 1$.
		\While{we can pick $w_i \in L^\perp \cap (F_{i-1}^{*} \setminus F_{i-1}^\perp)$}
		\State Set $F_i \leftarrow F_{i-1} \cap w_i^\perp$.
		\State Set $i \leftarrow i + 1$.
		\EndWhile
	\end{algorithmic}
\end{algorithm}

The finite convergence of FRA has also been established in several recent works; see~\cite{liu2018exact,pataki2013strong,waki2013facial}.  
While FRA offers a systematic framework for restoring Slater’s condition for the intersection \( L \cap K \), it faces a significant computational challenge: at each iteration, it requires solving the following problem, $w_i \in L^\perp \cap \left(F_{i-1}^{*} \setminus F_{i-1}^\perp\right),$
which is itself a nontrivial optimization task. We refer to the problem of identifying such an element \( w_i \) as the \emph{FR auxiliary problem}. For example, in the context of SDP, when applying FRA to \( L \cap \mathbb{S}_+^n \), the FR auxiliary problem is to find a nonzero matrix in \( L^{\perp} \cap \mathbb{S}_+^n \), which translates into solving an SDP feasibility problem. This implies that the FR auxiliary problem can be as difficult as the original SDP itself. Consequently, FRA may become impractical for large-scale problems or when applied directly to SDP relaxations of combinatorial optimization models.

\subsection{SDP Relaxations}

Consider the feasible set \( P \) defined in \eqref{bqpf}. We study SDP relaxations of the following form. By lifting feasible solutions into \( \mathbb{S}^{n+1} \), we define:
\begin{equation}\label{liftedP}
	S := \left\{ \begin{bmatrix}
		1\\
		x
	\end{bmatrix}
	\begin{bmatrix}
		1 & x^{\top}
	\end{bmatrix} \,\middle|\, x \in P \right\}.
\end{equation}
Note that all matrices in \( S \) are positive semidefinite. For any polyhedral set \( L \subseteq \mathbb{S}^{n+1} \) such that \( S \subseteq L \), the intersection
\begin{equation}\label{sdprelax}
	L \cap \mathbb{S}_+^{n+1}
\end{equation}
forms an outer approximation to \( S \) and is referred to as an \emph{SDP relaxation} of \( S \). Since there is a one-to-one correspondence between the original feasible set \( P \) and its lifted counterpart \( S \), we also refer to \( L \cap \mathbb{S}_+^{n+1} \) as an SDP relaxation of \( P \). One of the most well-known SDP relaxation is \emph{Shor's relaxation}, the specific details of which we provide later in Section 5.

We can efficiently optimize a linear function over this set, see \cite{nesterov1994interior,wolkowicz2012handbook,grotschel2012geometric}, yielding a lower bound for the original MBQP problem \eqref{bqp}:
\begin{equation}\label{sdprelaxopt}
	\begin{array}{rll}
		\min & \left\{  \langle \tilde{Q}, Y \rangle \mid  Y \in L \cap \mathbb{S}_+^{n+1} \right\}\\
	\end{array}
\end{equation}
where
$
\tilde{Q} := [\begin{smallmatrix}
	0 & \frac{1}{2}c^{\top} \\
	\frac{1}{2}c & Q
\end{smallmatrix}] \in \mathbb{S}^{n+1}.
$
We describe the so-called \emph{arrow constraints} which are implied by the binary variables. The rows and columns of the matrices in $S$ are indexed by $\{0,1,\ldots,n\}$. For any \( Y \in S \), the binary constraints on \( x \) imply \( Y_{00} = 1 \) and \( Y_{0i} = Y_{ii} \) for \( i = 1, \ldots, r \). Define the arrow operator \( \arrow: \mathbb{S}^{n+1} \to \mathbb{R}^{r+1} \) as:
\begin{equation}\label{def_arrow}
	\arrow(Y) := \begin{bmatrix}
		Y_{00} \\
		Y_{11} - \frac{1}{2}(Y_{01} + Y_{10}) \\
		\vdots \\
		Y_{rr} - \frac{1}{2}(Y_{0r} + Y_{r0})
	\end{bmatrix}.
\end{equation}
Let \( e_0 \) be the standard unit vector in \( \mathbb{R}^{r+1} \) with 1 in the first coordinate. The constraint \( \arrow(Y) = e_0 \), known as the arrow constraint, holds for all \( Y \in S \). Note that the arrow operator depends on the number \( r \) of binary variables, though we omit the superscript for simplicity. In Section~\ref{sec_num}, we will describe three SDP relaxations of varying strength and computational cost, all of which will be used to test our special FRA.

To simplify the presentation of applying facial reduction to the SDP relaxation \( L \cap \Snop \), we assume throughout the theoretical discussion that \( L \) is an affine set.
Under this assumption, the FR auxiliary problem involves the same set \( L^{\perp} \cap \Snop \) as in Algorithm~\ref{alg1}. If $L \cap \Snop$ does not satisfy Slater's condition, applying the first iteration of Algorithm~\ref{alg1} to the SDP relaxation \( L \cap \Snop \), we obtain a proper face \( F \unlhd \Snop \) such that \( L \cap F = L \cap \Snop \). Since \( F \) can have lower dimension than \( \Snop \), the reformulated problem \( L \cap F \) constitutes a smaller SDP relaxation. 

For any face \( F \unlhd \Snop \), we refer to \( L \cap F \) as a \emph{facially reduced formulation} of \( L \cap \Snop \) if
$
S \subseteq L \cap F \subseteq L \cap \Snop.
$
This definition slightly generalizes the standard FRA described in Algorithm~\ref{alg1}, which only produces faces \( F \unlhd \Snop \) satisfying \( L \cap F = L \cap \Snop \). The generalized notion is particularly useful for unifying various specialized FRA approaches under a common framework.

\begin{remark}[Linear inequalities in computational formulations]\label{rem:ineq}
	\rvv{The computational formulations in Section~\ref{sec_num} additionally contain linear inequality constraints, so that \( L \) takes the polyhedral form \( L = \{ Y \in \Sno \mid \mathcal{A}(Y) = b, \ \mathcal{B}(Y) \leq d \} \) rather than an affine subspace. Here \( \mathcal{A} \) is the linear map collecting the equality constraints (as in the footnote above) and \( \mathcal{B} \) is the linear map collecting the inequality constraints, with adjoints \( \mathcal{A}^{*} \) and \( \mathcal{B}^{*} \). Such inequalities are readily accommodated. Introducing nonnegative slack variables turns them into equalities and replaces the cone \( \Snop \) with \( \Snop \times \mathbb{R}^{k}_{+} \), rendering \( L \) affine in the extended space. Equivalently, one may retain the inequalities and only adjust the FR auxiliary problem, seeking an exposing certificate of the form \( W = \mathcal{A}^{*}(y) + \mathcal{B}^{*}(z) \), where \( y \) is free for the equality constraints and \( z \geq 0 \) for the inequality constraints. The only change is thus the additional sign constraint \( z \geq 0 \) on the multipliers; we no longer have the simple expression \( L^{\perp} \cap \Snop \) for the auxiliary feasible set, but the facial reduction procedure is otherwise identical. We note, moreover, that this subtlety does not affect the primal FRA developed in Section~3 in its main mode of operation, where a maximum-rank feasible solution yields \( L \cap G \) directly without solving any FR auxiliary problem.}
\end{remark}

\section{A Primal Approach to Facial Reduction}

\subsection{The Primal FRA}

Standard implementations of FRA require finding a nonzero element in \( L^{\perp} \cap \mathbb{S}_+^n \) which is itself a computationally demanding task. (Note that we use $\Snp$ for general discussions, and $\Snop$ when referring to SDP relaxations.) To address this challenge, several practical strategies have been proposed. One such approach is to replace \( \mathbb{S}_+^n \) with a tractable inner approximation \( K \), and instead search for a point in the subset \( L^{\perp} \cap K \). This idea, known as \emph{partial facial reduction}, was introduced in~\cite{permenter2018partial}. Despite variations in implementation, the essential goal of all FRAs remains the same: to reduce the computational burden associated with solving the FR auxiliary problem.

The method proposed in this work follows this principle but adopts a novel strategy. Our key observation is that any primal feasible solution exposes a face of \( \mathbb{S}_+^n \) that contains the entire set \( L^{\perp} \cap \mathbb{S}_+^n \); see Lemma~\ref{smallface} and~\ref{smallface2}. This observation enables us to reduce the dimension of \( L^{\perp} \cap \mathbb{S}_+^n \). Specifically, we use a primal feasible solution to define a smaller face that still contains all exposing vectors in $L^{\perp} \cap \Snp$. This leads to the following special FRA:
\begin{algorithm}[H]
	\caption{Primal Facial Reduction Algorithm (Primal FRA)}
	\label{alg2}
	\begin{algorithmic}[1]
		\Statex \textbf{Input:} An SDP relaxation \( L \cap \mathbb{S}_+^n \) for $S$ in \eqref{liftedP}, and a feasible solution \( X^* \in L \cap \mathbb{S}_+^n \).
		\Statex \textbf{Output:} A facially reduced formulation of \( L \cap \mathbb{S}_+^n \).
		\State Define the face $G \unlhd \Snp$,
		\begin{equation}\label{newG}
			G := \left\{ X \in \mathbb{S}_+^n \mid \operatorname{range}(X) \subseteq \operatorname{range}(X^*) \right\}.
		\end{equation}
		\If{\( X^* \) has maximum rank in \( \conv(S) \)}
		\State \Return \( L \cap G \)
		\Else
		\State Find an element \( W^* \in L^\perp \cap G^{\triangle} \). Define the face
		\begin{equation}\label{newF}
			F := \left\{ X \in \mathbb{S}_+^n \mid \operatorname{range}(X) \subseteq \operatorname{null}(W^*) \right\}.
		\end{equation}
		\State \Return \( L \cap F \)
		\EndIf
	\end{algorithmic}
\end{algorithm}

The description of Algorithm~\ref{alg2} naturally raises several questions: How can a feasible solution be obtained? How can we determine whether it has maximum rank? These practical concerns are addressed in detail in Section~\ref{genfea}, but for now, we focus on the correctness of the algorithm. In particular, we show that both \( L \cap G \) and \( L \cap F \) are facially reduced formulations of the original feasible set \( L \cap \mathbb{S}_+^n \).

\begin{lemma}\label{smallface}
	In Algorithm~\ref{alg2}, the face \( G \) defined in \eqref{newG} satisfies the following.
	\begin{enumerate}
		\item \rvv{Without any additional assumption,} \( L \cap G \subseteq L \cap \mathbb{S}_+^n \) and \( L \cap G \) satisfies Slater's condition.
		\item \rvv{If, in addition, \( X^* \) has maximum rank in \( \conv(S) \), then} \( S \subseteq L \cap G \).
	\end{enumerate}
\end{lemma}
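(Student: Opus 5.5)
Part 1 has two claims. The inclusion \(L \cap G \subseteq L \cap \Snp\) is immediate from \(G \subseteq \Snp\) in \eqref{newG}. For Slater's condition we have to exhibit a point of \(L\) in \(\ri(G)\). The natural candidate is \(X^*\) itself, which lies in \(L\) by assumption. Write \(\mathcal{V} := \range(X^*)\) with \(k = \dim \mathcal{V}\), and choose \(V \in \mathbb{R}^{n\times k}\) whose columns form a basis of \(\mathcal{V}\). Then \(G = \{ V U V^\top \mid U \in \mathbb{S}^k_+ \}\), and \(U \mapsto VUV^\top\) is a linear isomorphism from \(\mathbb{S}^k\) onto \(\operatorname{span}(G)\). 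Under this isomorphism \(\ri(G)\) corresponds to \(\mathbb{S}^k_{++}\), so
\[
\ri(G) = \{ X \in \Snp \mid \range(X) = \mathcal{V} \}.
\]
Since \(\range(X^*) = \mathcal{V}\), we get \(X^* \in L \cap \ri(G)\), which is Slater's condition for \(L \cap G\). The only step needing care is this description of \(\ri(G)\), and it follows from the isomorphism just described.

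For part 2, let \(Y \in S\) be arbitrary. We must show \(Y \in L\) and \(Y \in G\). Membership in \(L\) holds because \(L\) is a relaxation, so \(S \subseteq L\). Positive semidefiniteness holds because \(Y\) is a rank-one Gram matrix. What remains is \(\range(Y) \subseteq \range(X^*)\). Here we use maximality of rank, and this is the main step. Set \(Z := \tfrac12 (X^* + Y)\). Since \(X^* \in \conv(S)\) (this is how I read the hypothesis "maximum rank in \(\conv(S)\)"), convexity gives \(Z \in \conv(S)\).

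For PSD matrices \(A, B\) we have \(\range(A+B) = \range(A) + \range(B)\), because \(\operatorname{null}(A+B) = \operatorname{null}(A) \cap \operatorname{null}(B)\) (if \(x^\top (A+B) x = 0\) then both quadratic forms vanish). Applying this with \(A = X^*\) and \(B = Y\),
\[
\range(Z) = \range(X^*) + \range(Y) \supseteq \range(X^*).
\]
Maximality of \(\operatorname{rank}(X^*)\) in \(\conv(S)\) forces \(\operatorname{rank}(Z) \le \operatorname{rank}(X^*)\). Together with the inclusion, the two ranges are equal, and therefore \(\range(Y) \subseteq \range(X^*)\). This gives \(Y \in G\), so \(S \subseteq L \cap G\).

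One point to flag: if the hypothesis is meant only as "\(X^*\) has rank equal to the maximum rank over \(\conv(S)\)" without \(X^* \in \conv(S)\), the argument changes. I would then replace \(Z\) by a maximum-rank element \(M\) of \(\conv(S)\) and need \(\range(X^*) \supseteq \range(M)\), which does not follow from ranks alone. So I will state explicitly that \(X^* \in \conv(S)\), which matches the feasible solutions generated from \(P\) in the paper's setting.
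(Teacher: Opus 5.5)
Your proof is correct and follows the paper's argument: Part 1 exhibits $X^*$ itself as a point of $L \cap \ri(G)$, and Part 2 averages $\tfrac12(X^*+Y)\in\conv(S)$ and invokes maximality of rank, exactly as the paper does. You also make explicit what the paper uses tacitly: the description of $\ri(G)$, the identity $\range(A+B)=\range(A)+\range(B)$ for positive semidefinite $A,B$, and the requirement $X^*\in\conv(S)$, which the paper's averaging step needs as well.
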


\begin{proof}
	\rvv{We prove the two parts separately.}

	\rvv{\emph{Part 1.}} By construction, \( G \) is a face of \( \mathbb{S}_+^n \), so \( L \cap G \subseteq L \cap \mathbb{S}_+^n \). \rvv{Since \( X^* \in L \) and \( X^* \in \operatorname{ri}(G) \) by the definition of \( G \) in \eqref{newG}, the point \( X^* \) lies in \( L \cap \operatorname{ri}(G) \). Hence \( L \cap G \) satisfies Slater's condition. Note that this part does not rely on any rank assumption on \( X^* \).}

	\rvv{\emph{Part 2.} Now suppose \( X^* \) has maximum rank in \( \conv(S) \).} To show the inclusion $S \subseteq L \cap G$, let \( X \in S \) be arbitrary. \rvv{Since \( X^* \) has maximum rank in \( \conv(S) \),} we must have \( \operatorname{range}(X) \subseteq \operatorname{range}(X^*) \), or else the average \( \tfrac{1}{2}(X + X^*) \) would also be in $\conv(S)$ and have strictly higher rank than \( X^* \), contradicting its maximality.
	Hence, \( X \in G \), and since \( S \subseteq L \), we obtain \( X \in L \cap G \). This proves that \( S \subseteq L \cap G\).
\end{proof}

\begin{lemma}\label{smallface2}
	In Algorithm~\ref{alg2}, we have \( L \cap F = L \cap \mathbb{S}_+^n \).
\end{lemma}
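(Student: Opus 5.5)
We must show $L \cap F = L \cap \mathbb{S}_+^n$, where $W^* \in L^\perp \cap G^{\triangle}$ and $F = \{X \in \mathbb{S}_+^n \mid \operatorname{range}(X) \subseteq \operatorname{null}(W^*)\}$. The inclusion $L \cap F \subseteq L \cap \mathbb{S}_+^n$ holds because $F \subseteq \mathbb{S}_+^n$. For the reverse inclusion, the plan is to show that $W^*$ is positive semidefinite. Then $W^* \in L^\perp \cap \mathbb{S}_+^n$, and every $X \in L \cap \mathbb{S}_+^n$ satisfies $\langle W^*, X\rangle = 0$. The usual facial reduction argument then gives $W^* X = 0$: both $X$ and $W^*$ are PSD with zero trace inner product, so $\operatorname{tr}(X^{1/2}W^*X^{1/2}) = 0$, hence $W^{*1/2}X^{1/2}=0$, hence $W^*X=0$. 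This yields $\operatorname{range}(X)\subseteq \operatorname{null}(W^*)$, i.e.\ $X\in F$.

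The key step, and the main obstacle, is that $W^*$ lies in $G^{\triangle} = G^* \cap G^\perp$. The dual cone $G^*$ of a proper face is strictly larger than $\mathbb{S}_+^n$, so $W^*$ need not be PSD in general. I would use the information $W^* \in L^\perp$ together with $X^* \in L \cap \operatorname{ri}(G)$.

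\textbf{Step 1 (structure of $G^*$ and $G^\perp$).} Write $X^* = V D V^\top$ with $V$ an orthonormal basis of $\operatorname{range}(X^*)$ and $U$ an orthonormal basis of its complement. Then $G = V\mathbb{S}_+^k V^\top$. Consequently $W \in G^*$ if and only if $V^\top W V \succeq 0$, and $W\in G^\perp$ if and only if $V^\top W V = 0$.

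\textbf{Step 2 (form of $W^*$).} $W^* \in G^{\triangle}$ means $V^\top W^* V=0$, while the blocks $V^\top W^* U$ and $U^\top W^* U$ are a priori arbitrary.

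\textbf{Step 3 (pin down the remaining blocks).} Fix any $X\in L\cap\mathbb{S}_+^n$. Since $X^*$ is feasible, the point $X^*+t(X-X^*)$ lies in $L$ for all $t$ and stays PSD for $t$ near $0$ in the range-relevant directions. Using $\langle W^*, Y\rangle = 0$ for all $Y$ in $L$ (with $L$ affine and $L^\perp$ as in the footnote), I get $\langle W^*,X\rangle=\langle W^*,X^*\rangle=0$.

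I am not certain Step 3 forces the off-diagonal block $V^\top W^* U$ to vanish. If it does not, I would fall back on interpreting ``find $W^*\in L^\perp\cap G^\triangle$'' as the auxiliary step restricted to certificates valid on $\mathbb{S}_+^n$. Concretely, I would argue that $W^*$ may be chosen (or is required) PSD, since $G^\triangle\cap\mathbb{S}_+^n = \mathbb{S}_+^n\cap X^{*\perp}$ consists of the PSD matrices with range in $\operatorname{range}(U)$. With that, Steps 1–3 give $W^*=U(U^\top W^*U)U^\top\succeq 0$, and the trace argument in the first paragraph completes the proof.

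Short of that, I would argue directly from $\langle W^*, X\rangle = 0$ for all $X \in L \cap \mathbb{S}_+^n$, the $V$-block vanishing, and PSD-ness of $X$. Writing $X$ in the $(V,U)$ basis, its $U$-block vanishing would force its cross block to vanish, giving $\operatorname{range}(X)\subseteq\operatorname{range}(V)\subseteq \operatorname{null}$ of the relevant part of $W^*$.
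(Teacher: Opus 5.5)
The obstacle you single out does not exist. The paper defines the conjugate face using the dual of the \emph{ambient} cone, $F^{\triangle} := K^{*}\cap F^{\perp}$. Here that gives $G^{\triangle} = (\mathbb{S}_+^n)^{*}\cap G^{\perp} = \mathbb{S}_+^n\cap G^{\perp}$, which is exactly the set $\mathbb{S}_+^n\cap X^{*\perp}$ of PSD matrices with range in $\operatorname{range}(U)$ that you describe. It is not $G^{*}\cap G^{\perp}$ with $G^{*}$ the dual of $G$; that set is just $G^{\perp}$. So $W^{*}\succeq 0$ is part of the hypothesis, and your ``fallback'' is simply the definition.

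Your doubt about Step~3 was justified. Under your reading the conclusion can fail, so no argument using only $V^{\top}W^{*}V=\mathbf{O}$, $W^{*}\in L^{\perp}$ and $X^{*}\in L$ can work. For example, take $L=\{X\in\mathbb{S}^2 \mid X_{11}=X_{22}\}$, $X^{*}=[\begin{smallmatrix}1&1\\1&1\end{smallmatrix}]$ and $W=[\begin{smallmatrix}1&0\\0&-1\end{smallmatrix}]$. Then $W\in L^{\perp}\cap G^{\perp}$, but $\operatorname{null}(W)=\{\mathbf{0}\}$, so $F=\{\mathbf{O}\}$ while the identity lies in $L\cap\mathbb{S}^2_+$.

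Your last alternative should also be discarded. Nothing forces the $U$-block of an arbitrary $X\in L\cap\mathbb{S}_+^n$ to vanish; that would mean $L\cap\mathbb{S}_+^n\subseteq G$. This fails in the else-branch of Algorithm~\ref{alg2} whenever $X^{*}\in\operatorname{conv}(S)$, because some $X\in S$ then has range outside $\operatorname{range}(X^{*})$. Step~3 is also unneeded: $\langle W^{*},X\rangle=0$ for $X\in L$ is the definition of $L^{\perp}$.

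With $W^{*}\succeq 0$ in hand, your first paragraph is a complete and correct proof, and it takes a more direct route than the paper. You prove the needed inclusion head-on: $W^{*}\in L^{\perp}\cap\mathbb{S}_+^n$ is orthogonal to every $X\in L\cap\mathbb{S}_+^n$, and the trace argument gives $W^{*}X=\mathbf{O}$, i.e.\ $X\in F$. You never use $X^{*}$.

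The paper instead reduces the lemma to the identity $L^{\perp}\cap\mathbb{S}_+^n=L^{\perp}\cap G^{\triangle}$ and leaves the exposing-face step implicit. It spends its effort on the inclusion $L^{\perp}\cap\mathbb{S}_+^n\subseteq L^{\perp}\cap G^{\triangle}$: feasibility of $X^{*}$ gives $\langle W,X^{*}\rangle=0$, hence $\operatorname{range}(W)\subseteq\operatorname{null}(X^{*})$.

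For the equality $L\cap F=L\cap\mathbb{S}_+^n$ itself, only the trivial inclusion $G^{\triangle}\subseteq\mathbb{S}_+^n$ matters, so your argument is shorter and more transparent. What the paper's extra inclusion buys is the justification of the reduced FR auxiliary problem: searching in $L^{\perp}\cap G^{\triangle}$ loses no exposing vectors, so it has a nonzero element exactly when $L^{\perp}\cap\mathbb{S}_+^n$ does.
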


\begin{proof}
	It suffices to show that \( L^{\perp} \cap \mathbb{S}_+^n = L^{\perp} \cap G^{\triangle} \). Since \( G^{\triangle} \unlhd \mathbb{S}_+^n \), we clearly have \( L^{\perp} \cap \mathbb{S}_+^n \supseteq L^{\perp} \cap G^{\triangle} \). For the reverse inclusion, let \( W \in L^{\perp} \cap \mathbb{S}_+^n \). Then \( \langle W, X \rangle = 0 \) for all \( X \in L \cap \mathbb{S}_+^n \), and in particular for \( X^* \). Therefore, \( \operatorname{range}(W) \subseteq \operatorname{null}(X^*) \), which implies \( W \in G^{\triangle} \). Thus, \( W \in L^{\perp} \cap G^{\triangle} \), completing the proof.
\end{proof}

We highlight some key features of the primal FRA:

\begin{enumerate}
	\item \textbf{[Avoiding SDP Solves]} If the algorithm returns \( L \cap G \), then an SDP solve is completely avoided, and Slater's condition is guaranteed to hold.
	
	\item \textbf{[Reduced FR Auxiliary Problem]} If the rank of \( X^* \) is high, then the dimension of \( L^{\perp} \cap G^{\triangle} \) is small. This makes it computationally inexpensive to search for an exposing vector in the set \( L^{\perp} \cap G^{\triangle} \), which we refer to as the \emph{reduced FR auxiliary problem}.
	
	\item \textbf{[Compatibility with Existing Methods]} If the rank of \( X^* \) is low, finding a non-trivial element in \( L^{\perp} \cap G^{\triangle} \) remains challenging. However, we can apply existing methods such as \emph{partial FRA} \cite{permenter2018partial} or \emph{Sieve-SDP} \cite{zhu2019sieve} to this smaller set \( L^{\perp} \cap G^{\triangle} \), which can still lead to computational savings.

\end{enumerate}

\subsection{Generating Feasible Solutions}\label{genfea}

While finding a feasible solution for a general SDP can be challenging, it is often relatively straightforward in the context of SDP relaxations arising from combinatorial optimization problems. 

Let \( P \) denote the feasible set as defined in \eqref{bqpf}, and let \( L \cap \mathbb{S}_+^{n+1} \) be its SDP relaxation as defined in \eqref{sdprelax}. In view of the lifted formulation~\eqref{liftedP}, it suffices to identify a feasible solution \( x \in P \); the corresponding lifted matrix $[\begin{smallmatrix}
	1\\
	x
\end{smallmatrix}]
[\begin{smallmatrix}
	1 & x^{\top}
\end{smallmatrix}]$
is then a feasible solution in \( L \cap \mathbb{S}_+^{n+1} \).

Naturally, it is desirable to construct a feasible matrix with higher rank in order to simplify the set $L^{\perp} \cap \Snop$. To achieve this, we may generate a collection of feasible solutions \( x_0, \ldots, x_k \in P \) and define
\begin{equation}\label{Xstar}
	X^* := \frac{1}{k+1}\sum_{i=0}^k
	\begin{bmatrix}
		1\\
		x_i
	\end{bmatrix}
	\begin{bmatrix}
		1 & x_i^{\top}
	\end{bmatrix}.
\end{equation}
It is straightforward to verify that the vectors \( x_0, \ldots, x_k \) are affinely independent if and only if \( \operatorname{rank}(X^*) = k+1 \). Thus, our goal is to generate as many affinely independent feasible solutions in \( P \) as possible. In particular, if a maximal affinely independent subset of \( P \) is found, then \( X^* \) attains the maximum possible rank among all matrices in \( \conv(S) \). This connection between the affine structure of the original feasible set and the maximum-rank feasible matrix in $\conv(S)$ was first analyzed in \cite{tunccel2001slater}.

We now describe a method for finding a maximal affinely independent subset of feasible solutions in \( P \). Suppose that \( x_0, \ldots, x_k \in P \) are known to be affinely independent. We can either certify that they span the affine hull of \( P \), or identify a new feasible solution \( x \in P \) that is affinely independent of \( x_0, \ldots, x_k \). By iterating this procedure, we can incrementally construct a maximal affinely independent subset of \( P \), which in turn yields a maximal-rank matrix \( X^* \) for use in Algorithm~\ref{alg2}.

\begin{lemma}\label{main_thm}
	Let \( v \in P \), and let \( H \) be a linear subspace such that \( v + H \subseteq \operatorname{aff}(P) \). Then \( v + H = \operatorname{aff}(P) \) if and only if
	\begin{equation}\label{maineq}
		u^\top (x - v) = 0 \quad \text{for all } x \in P \text{ and } u \in H^\perp.
	\end{equation}
\end{lemma}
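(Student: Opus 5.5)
The plan is to prove the two directions separately, using only basic facts about affine hulls. The fact we need is this: since \( v \in P \), we have \( \operatorname{aff}(P) = v + \operatorname{span}\{x - v \mid x \in P\} \). So \( \operatorname{aff}(P) - v \) is a linear subspace. Call it \( D := \operatorname{span}\{x - v \mid x \in P\} \). The hypothesis \( v + H \subseteq \operatorname{aff}(P) \) then reads \( H \subseteq D \). The statement \( v + H = \operatorname{aff}(P) \) is equivalent to \( H = D \).

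For the direction ``\(\Rightarrow\)'', assume \( v + H = \operatorname{aff}(P) \). Take any \( x \in P \). Then \( x \in \operatorname{aff}(P) = v + H \), so \( x - v \in H \). Hence \( u^\top (x - v) = 0 \) for every \( u \in H^\perp \), which is exactly \eqref{maineq}.

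For the direction ``\(\Leftarrow\)'', assume \eqref{maineq}. Then for each \( x \in P \), the vector \( x - v \) is orthogonal to all of \( H^\perp \). Thus \( x - v \in (H^\perp)^\perp = H \), using that \( H \) is a linear subspace of the finite-dimensional space \( \mathbb{R}^n \). It follows that \( D = \operatorname{span}\{x - v \mid x \in P\} \subseteq H \). Combined with \( H \subseteq D \) from the hypothesis, this gives \( H = D \), and hence \( v + H = \operatorname{aff}(P) \).

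No step is a real obstacle. The only points needing care are the following.
\begin{itemize}
\item Justify \( \operatorname{aff}(P) = v + D \) for a base point \( v \in P \). Affine combinations \( \sum \lambda_i x_i \) with \( \sum \lambda_i = 1 \) can be rewritten as \( v + \sum \lambda_i (x_i - v) \). Conversely, \( v + \sum \mu_i (x_i - v) \) is an affine combination of \( v, x_1, \dots \).
\item Translate the inclusion hypothesis into \( H \subseteq D \) by subtracting \( v \).
\item Invoke the double-orthogonal-complement identity in \( \mathbb{R}^n \).
\end{itemize}
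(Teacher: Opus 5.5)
Your proof is correct and follows the same route as the paper. The forward direction is identical. Your reverse direction is the direct form of the paper's contrapositive, and both rest on $(H^\perp)^\perp = H$. Your explicit identity $\operatorname{aff}(P) = v + \operatorname{span}\{x - v \mid x \in P\}$ also justifies a step the paper leaves implicit: that $v + H \subsetneq \operatorname{aff}(P)$ forces some $x \in P$ with $x - v \notin H$.
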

\begin{proof}
	Assume \( v + H = \operatorname{aff}(P) \). Then for any \( x \in P \), we have \( x - v \in H \), which implies \( u^\top (x - v) = 0 \) for all \( u \in H^\perp \), since every vector in \( H^\perp \) is orthogonal to \( H \). Conversely, suppose \( v + H \subsetneq \operatorname{aff}(P) \). Then there exists \( x \in P \) such that \( x - v \notin H \). By the fundamental theorem of linear algebra, there exists \( u \in H^\perp \) such that \( u^\top (x - v) \neq 0 \). This shows that \eqref{maineq} does not hold, completing the proof.
\end{proof}

We can use the characterization of the affine hull of \( P \) provided in Lemma~\ref{main_thm} to compute \( \operatorname{aff}(P) \) as follows. 
Assume \( v + H \subseteq \operatorname{aff}(P) \) for some \( v \in P \) and a \((k - 1)\)-dimensional linear subspace \( H \). Let \( u_1, \ldots, u_{n - k + 1} \) be any linearly independent vectors spanning \( H^\perp \). Then condition~\eqref{maineq} holds if and only if
\begin{equation}\label{maxmin}
	\max \{ u_i^\top x \mid x \in P \} = \min \{ u_i^\top x \mid x \in P \}, \quad \text{for } i = 1, \ldots, n - k + 1.
\end{equation}
Thus, we can verify whether~\eqref{maineq} holds by solving at most \( 2(n - k + 1) \) optimization problems of the form~\eqref{maxmin}. Moreover, if equality fails for any index \( i \), then either the maximizer or minimizer, say \( x^* \), satisfies \( u_i^\top (x^* - v) \neq 0 \). This implies \( x^* - v \notin H \), and by Lemma~\ref{main_thm}, we can construct a strictly larger subspace \( \tilde{H} = \operatorname{span}(H \cup \{x^* - v\}) \), for which \( v + \tilde{H} \subseteq \operatorname{aff}(P) \).

Starting with an initial feasible solution \( v \in P \) and \( H = \{0\} \), we iteratively construct a set of affinely independent feasible points that span \( \operatorname{aff}(P) \). This process terminates in at most \( n \) iterations. In practice, however, we may terminate the algorithm earlier due to computational limitations encountered while solving the subproblems in \eqref{maxmin}. The full procedure is summarized in Algorithm~\ref{alg3}.

\begin{algorithm}[H]
	\caption{Generating a Feasible Matrix Solution}\label{alg3}
	\begin{algorithmic}[1]
		\Statex \textbf{Input:} A mixed-binary feasible set \( P \) as defined in \eqref{bqpf}.
		\Statex \textbf{Output:} A feasible matrix solution \( X^* \in \conv(S) \), see \eqref{liftedP}.
		\State Find \( v:=x_0 \in P \) and set \( H = \{0\} \).
		\For{$k = 1, \ldots, n$}
		\State Let \( u_1, \ldots, u_{n-k+1} \) be linearly independent vectors spanning \( H^{\perp} \).
		\If{we can find \( x_k \in P \setminus (v + H) \) via \eqref{maxmin}}
		\State \( H \leftarrow \operatorname{span}(H \cup \{x_k - v\}) \).
		\ElsIf{we can verify \( v + H = \operatorname{aff}(P) \) via \eqref{maxmin}}
		\State \Return \( X^* \) as defined in \eqref{Xstar}; it has maximum rank.
		\ElsIf{some \eqref{maxmin} problems are intractable}
		\State \Return \( X^* \) as defined in \eqref{Xstar}. \label{line9}
		\EndIf
		\EndFor
	\end{algorithmic}
\end{algorithm}
At this point, we emphasize a key distinction: the original MBQP problem~\eqref{bqp} involves minimizing a non-convex quadratic objective over the feasible set \( P \), whereas the subproblems in~\eqref{maxmin} involve linear objective functions over the same set \( P \), and thus constitute mixed-binary linear programs (MBLPs). Although both MBQP and MBLP problems are NP-hard and share the same feasible region, their practical tractability differs substantially. In practice, MILPs are often significantly easier to solve. For example, MILPs with over 200 variables can typically be solved within minutes using modern solvers, whereas MBQPs of comparable size may require hours. This disparity highlights the advantage of leveraging powerful MILP solvers to tackle problems that would otherwise be computationally prohibitive—an idea also explored in~\cite{bertsimas2016best,bertsimas2019machine} and \cite{anstreicher2021testing}.

The performance of the primal FRA is determined by the availability of feasible solutions. To clarify this, we broadly categorize the optimization problems in~\eqref{maxmin}  into the following classes:
\begin{sloppypar}
\begin{enumerate}
	\item \textbf{Polynomial-time solvable problems.} In some cases, the problems in~\eqref{maxmin} admit polynomial-time solutions. A canonical example is the classical minimum spanning tree problem. In contrast, its generalization—the quadratic minimum spanning tree problem (QMSTP)—is NP-hard and has been studied extensively; see~\cite{zhout1998effective,sundar2010swarm,cordone2012solving,pereira2015lower,oncan2010quadratic}. Recent works~\cite{de2024spanning,sotirov2024quadratic} explore SDP-based relaxations of the QMSTP, where our proposed facial reduction method can be directly applied. For such problems, the primal FRA is guaranteed to restore Slater’s condition in polynomial time.
	
	\item \textbf{NP-hard problems with tractable feasibility.} In many practical applications, the problems in~\eqref{maxmin} are NP-hard, yet finding feasible solutions is relatively easy. This makes our method broadly applicable, even if solving~\eqref{maxmin} to optimality is intractable in theory. In our experiments, we were able to restore Slater’s condition for a wide range of benchmark instances. When a new affinely independent feasible solution cannot be found, we terminate Algorithm~\ref{alg3} at Line~\ref{line9} and return the current feasible matrix. Although this matrix may not have maximal rank, it often results in a significant size reduction of the FR auxiliary problem and improved computational performance.
	
	\item \textbf{Intractable problems with difficult feasibility.} In rare cases, even finding a single feasible solution in \( P \) is computationally intractable. In such instances, our method cannot be applied to perform facial reduction. Fortunately, such cases are uncommon in practice, as evidenced by our empirical results on benchmark libraries such as MIPLIB.
\end{enumerate}
\end{sloppypar}

\noindent In summary, although the proposed approach may involve solving NP-hard subproblems, it requires only a single feasible solution to initiate facial reduction. In most practical settings, such a point can be found efficiently or generated heuristically. Consequently, the method is both effective and broadly applicable across a wide range of real-world optimization problems.

\begin{remark}
	In practice, some of the optimization problems in~\eqref{maxmin} may not be solvable to optimality within a reasonable timeframe. However, it is important to note that solving~\eqref{maxmin} to optimality is only necessary when verifying the equality condition in~\eqref{maineq}. In most cases, it suffices to find a feasible solution satisfying \( u_i^\top(x - v) \neq 0 \). This condition can often be efficiently checked during a branch-and-bound search. Specifically, whenever a new feasible solution is found, we verify whether \( u_i^\top(x - v) \neq 0 \) (up to a numerical tolerance). If so, we may terminate the search early and return the feasible solution.
\end{remark}

\begin{remark}
	\textbf{[Tightening and Regularization]} The Primal FRA extends the utility of facial reduction beyond standard regularization. While the standard FRA identifies the minimal face containing the relaxation set $L \cap \Snop$ (leaving the feasible set of the relaxation unchanged), the Primal FRA constructs the face $G$ using $X^*$ returned by Algorithm~\ref{alg3}, which is a convex combination of \emph{integer} feasible solutions. When Algorithm~\ref{alg3} terminates at Line~7, the range space of $X^{*}$ characterizes the smallest face $G$ of $\Snp$ containing the set $S$ defined in \eqref{liftedP}. This allows us to immediately conclude that $L \cap G$ satisfies Slater's condition without further computation. Furthermore, if $X^{*}$ does not have the maximum rank in $L \cap \Snop$, then $L \cap G \subsetneq L \cap \Snop$, meaning the reduced problem is a strictly stronger SDP relaxation than the original. Consequently, the Primal FRA simultaneously restores Slater's condition and \emph{tightens} the relaxation. We highlight this distinct advantage here, rather than formally incorporating it into Algorithm~\ref{alg2}, to keep the presentation of the main algorithmic framework simple to follow.
\end{remark}

\begin{remark}[Uniqueness vs. Heuristic Restriction]
	The properties of the facially reduced formulation depend heavily on the termination condition of Algorithm~\ref{alg3}. 
	
	\begin{itemize}
		\item \textbf{Exact Regime (Completion):} If the algorithm terminates at Line~7, the constructed matrix $X^*$ is guaranteed to have the maximum possible rank within the convex hull of the lifted feasible set $S$ in \eqref{liftedP}. In this case, the generated face $G$ is \textbf{unique}. The resulting reduction is independent of the specific sequence of points sampled. In addition, if the affine hull of $P$ has dimension $d$, we strictly require exactly $d+1$ affinely independent feasible points $x_{0},\ldots,x_{d}$ to recover the minimal face.
		
		\item \textbf{Heuristic Regime (Early Termination):} If the algorithm terminates early at Line~9 (e.g., due to computational limits), the matrix $X^*$ is merely a convex combination of the specific feasible points found up to that iteration. Consequently, the resulting face $G$ depends on the specific samples and may strictly contain the minimal face. In this regime, the reduced formulation is valid but \textbf{not unique}, representing a heuristic restriction based on the sampled subspace.
	\end{itemize}
\end{remark}

\subsection{Reducing the Number of Subproblems}

At the \( k \)-th iteration of Algorithm~\ref{alg3}, the orthogonal complement \( H^\perp \) is a subspace of dimension \( n - k + 1 \). Consequently, up to \( 2(n - k + 1) \) optimization problems of the form~\eqref{maxmin} may need to be solved. In the worst case, this results in approximately \( \mathcal{O}(n^2) \) subproblems across all iterations. In this section, we show that it is often possible to reduce this number significantly by exploiting the structure of the problem.

Suppose there exists a linear subspace \( \bar{H} \) of dimension \( n - \ell \) such that
\begin{equation}\label{Hbar}
	v + H \subseteq \operatorname{aff}(P) \subseteq v + \bar{H}
\end{equation}
for some feasible solution \( v \in P \) and current subspace \( H \). We first show that such a subspace \( \bar{H} \) can lead to a substantial reduction in the number of subproblems required. We then demonstrate how \( \bar{H} \) can often be identified directly from the linear constraints that define \( P \).

Since \( \operatorname{aff}(P) - v \subseteq \bar{H} \), it follows that $u^\top(x - v) = 0$ $\text{for all } x \in P \text{ and } u \in \bar{H}^\perp.$
As \( \bar{H}^\perp \subseteq H^\perp \), if we choose a set of linearly independent vectors \( u_1, \ldots, u_{n - k + 1} \) spanning \( H^\perp \), with the first \( \ell \) vectors \( u_1, \ldots, u_\ell \) spanning \( \bar{H}^\perp \), then we only need to solve~\eqref{maxmin} for \( u_{\ell+1}, \ldots, u_{n - k + 1} \). This reduces the number of subproblems in the \( k \)-th iteration by \( 2\ell \), which can be substantial when \( \ell > 0 \), i.e., when \( \bar{H} \) is not full-dimensional.

Furthermore, the maximum number of iterations in Algorithm~\ref{alg3} is reduced from \( n \) to \( n - \ell \). If the algorithm reaches the \( (n - \ell) \)-th iteration, the current subspace \( H \) has dimension \( \ell \), and since \( v + H \subseteq \operatorname{aff}(P) \subseteq v + \bar{H} \), and both subspaces share the same dimension, it follows that $\operatorname{aff}(P) = v + H = v + \bar{H}.$
Hence, no optimization problems need to be solved in the final iteration. This not only accelerates the algorithm but also provides a practical and effective termination criterion in our implementation. We describe such an application in Remark~\ref{Hbarapp}.

Importantly, the subspace \( \bar{H} \) in~\eqref{Hbar} often arises naturally from the definition of \( P \). Specifically, consider the linear programming relaxation of the mixed-binary set \( P \), given by:
\begin{equation}\label{bqpf_lp}
	\begin{array}{rlll}
		a_i^\top x & = & b_i & \text{for } i = 1, \ldots, p, \\
		a_i^\top x & \leq & b_i & \text{for } i = p+1, \ldots, m.
	\end{array}
\end{equation}
The affine hull of this polyhedron—obtained by identifying the set of (implicit) equality constraints—provides a natural candidate for the subspace \( \bar{H} \). This subspace can be extracted efficiently using standard techniques in linear programming, as discussed in \cite{freund1985identifying,schrijver1998theory,fukuda2016lecture,kelly2024applications}, which can significantly reduce the computational cost of Algorithm~\ref{alg3}.

\begin{remark}\label{Hbarapp}
	We discuss an application of the linear subspace \( \bar{H} \) in the implementation. Many MIPLIB problem instances are known to suffer from numerical instability due to scaling issues. To mitigate this, we restrict our attention to feasible solutions with bounded entries. Specifically, we define the restricted feasible set $\bar{P} := P \cap \{ x \in \mathbb{R}^n \mid x_i \in [-10^{5}, 10^{5}] \},$
	and search for feasible solutions within \( \bar{P} \) instead of the full set \( P \). As a consequence, we may no longer be able to certify that the affine hull of \( P \) has been fully recovered, since \( \operatorname{aff}(\bar{P}) \subseteq \operatorname{aff}(P) \). Nevertheless, if we succeed in identifying \( n - \ell + 1 \) affinely independent feasible solutions within \( \bar{P} \), we can still conclude that these solutions span the affine hull of \( P \).
	
	It is important to emphasize that facial reduction is still being performed on the original set \( P \); the restriction to \( \bar{P} \) is introduced solely to reduce numerical instability in practice. 
	
\end{remark}

\subsection{A Quadratic Speedup via Randomization}

While the technique discussed in the previous section reduces the number of subproblems, the total number of optimization problems remains \( \mathcal{O}(n^2) \) in the worst case. In this section, we demonstrate that it suffices to solve at most two optimization problems per iteration without imposing any additional structural assumptions. This yields a significant improvement, reducing the total number of required MILP problems to \( \mathcal{O}(n) \), thereby achieving a quadratic speedup.

\begin{lemma}\label{quad}
	Assume \( v + H \subsetneq \operatorname{aff}(P) \), and let \( u_1, \ldots, u_{n - k + 1} \in \mathbb{R}^n \) be linearly independent vectors spanning \( H^\perp \). Let \( \alpha_1, \ldots, \alpha_{n - k + 1} \in \mathbb{R} \) be i.i.d.\ random variables drawn from the standard normal distribution, and define \( u := \sum_{i=1}^{n - k + 1} \alpha_i u_i \). Consider the two problems
	\[
	\min \{ u^\top x \mid x \in P \} \quad \text{and} \quad \max \{ u^\top x \mid x \in P \}.
	\]
	Then, with probability $1$, the optimal value of at least one of these problems differs from \( u^\top v \) \rvv{(and equals \( -\infty \) or \( +\infty \) when the corresponding problem is unbounded). Consequently, there exists a feasible point \( x^* \in P \) with \( u^\top(x^* - v) \neq 0 \); when the relevant optimum is finite, \( x^* \) may be taken as the corresponding minimizer or maximizer.}
\end{lemma}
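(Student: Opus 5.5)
The plan is to use Lemma~\ref{main_thm} to find a "witness" direction, and then show that the random direction \(u\) inherits the witness property almost surely. Since \(v + H \subsetneq \operatorname{aff}(P)\), the converse direction of Lemma~\ref{main_thm} yields a point \(\bar x \in P\) with \(\bar x - v \notin H\). Set \(d := \bar x - v\). Because \(H^\perp\) is spanned by \(u_1, \ldots, u_{n-k+1}\) and \(d \notin H = (H^\perp)^\perp\), the coefficient vector
\[
c := (u_1^\top d, \ldots, u_{n-k+1}^\top d)
\]
is nonzero.

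Next I compute
\[
u^\top(\bar x - v) = \sum_{i=1}^{n-k+1} \alpha_i\, u_i^\top d = c^\top \alpha .
\]
Here \(\alpha\) is a standard Gaussian vector and \(c \neq 0\), so \(c^\top \alpha\) is normally distributed with variance \(\|c\|^2 > 0\). It therefore vanishes with probability zero; equivalently, the hyperplane \(\{\alpha \mid c^\top \alpha = 0\}\) has Lebesgue measure zero and \(\alpha\) has a density. Hence, with probability \(1\), \(u^\top \bar x \neq u^\top v\).

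It remains to pass from this witness point to the optimal values. Assume \(u^\top \bar x \neq u^\top v\). If \(u^\top \bar x < u^\top v\), then
\[
\min\{u^\top x \mid x\in P\} \le u^\top \bar x < u^\top v,
\]
so the minimum differs from \(u^\top v\). This holds whether the minimum is finite or \(-\infty\), the latter occurring when the problem is unbounded. The case \(u^\top \bar x > u^\top v\) is symmetric and uses the maximum. For the consequence, the point \(x^* := \bar x\) already serves. When the relevant optimum is finite and attained, the optimizer \(x^*\) satisfies \(u^\top x^* \neq u^\top v\) by the strict inequality above, so it may be taken instead.

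The main subtlety I expect is attainment for mixed-binary \(P\). If the data are rational, a finite optimum of a MILP is attained (Meyer's theorem), and I would note this hypothesis when invoking the optimizer. The probabilistic step itself is routine once \(c \neq 0\) has been established from \(d \notin H\).
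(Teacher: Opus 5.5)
Your proof is correct and follows the paper's argument step for step: fix a witness $\bar x\in P$ with $\bar x - v\notin H$ (chosen independently of $\alpha$), note that $u^\top(\bar x - v)=c^\top\alpha$ with $c\neq 0$ is nonzero with probability one, and then compare with the minimum or the maximum according to its sign. Your Gaussian-variance justification is slightly more explicit than the paper's, and your rationality caveat (via Meyer's theorem) is unnecessary: the only integrality in $P$ is binary, so $P$ is a finite union of polyhedra, and any finite optimum of a linear objective over $P$ is therefore attained without extra hypotheses.
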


\begin{proof}
	Since \( v + H \subsetneq \operatorname{aff}(P) \), there exists some \( x \in P \) such that \( x - v \notin H \). Therefore, \( u_i^\top(x - v) \neq 0 \) for at least one \( i \). Because the coefficients \( \alpha_i \) are drawn independently from a continuous distribution, the linear combination \( u^\top(x - v) = \sum_{i=1}^{n - k + 1} \alpha_i u_i^\top(x - v) \neq 0 \) with probability 1. \rvv{Suppose \( u^\top(x - v) < 0 \); the case \( u^\top(x - v) > 0 \) is symmetric. Then the optimal value of \( \min\{ u^\top x \mid x \in P \} \) is at most \( u^\top x < u^\top v \), and hence differs from \( u^\top v \). If this optimum is finite, the minimizer \( x^*_{\min} \) satisfies \( u^\top(x^*_{\min} - v) \leq u^\top(x - v) < 0 \), so we may take \( x^* = x^*_{\min} \). If instead the problem is unbounded below, then any feasible point with objective value strictly less than \( u^\top v \), for instance \( x \) itself, satisfies \( u^\top(x^* - v) < 0 \). In either case, a witness point \( x^* \in P \) with \( u^\top(x^* - v) \neq 0 \) is obtained.}
\end{proof}

\rvv{The minimizer and maximizer above are guaranteed to exist whenever the linear objective \( u^\top x \) is bounded over \( P \), which holds in particular when \( P \) is bounded. In our implementation, we work with the bounded restricted set \( \bar{P} \) introduced in Remark~\ref{Hbarapp}, so both optima are always attained and the unbounded case does not arise in practice.}

%

\subsection{Numerical Robustness}
Numerical algorithms often struggle to distinguish between values that are truly zero and those that are merely close to zero due to rounding errors or the limitations of finite-precision arithmetic. In the context of FRAs, accurately identifying the range space of the feasible matrix is essential, as even minor misclassifications can compromise the correctness of the reduced problem.

In this section, we first clarify how numerical issues may arise in the primal FRA. (We emphasize, however, that such numerical difficulties were rarely encountered in our computational experiments.) We then propose a heuristic approach to mitigate these numerical issues when they do occur. Finally, we argue that in the rare event where numerical instability persists and the heuristic fails, one can eliminate these issues by taking a conservative inner approximation of the face \( G \). This simple modification guarantees numerical stability, at the cost of only a marginally smaller reduction in problem size.

We first clarify the potential numerical issues in the primal FRA. In Algorithm~\ref{alg2}, the face \( G \) is determined by the range space of the matrix \( X^{*} = VV^{\top} \), where \( V \) is constructed from feasible solutions \( x_0, \ldots, x_k \in P \) via Algorithm~\ref{alg3}:
\begin{equation}\label{Vdef}
	V := \frac{1}{\sqrt{k+1}} \begin{bmatrix}
		1 & \cdots & 1 \\
		x_0 & \cdots & x_k
	\end{bmatrix}.
\end{equation}
Assume the affine subspace spanned by $x_{0},\ldots,x_{k}$ is $\aff(P)$ and \( k < n \). Let \(\sum_{i=1}^{k+1} \sigma_i u_i v_i^{\top} \) be the numerical singular value decomposition of $V$. In practice, the range space of \( X^* \) is numerically approximated as \( \operatorname{span}\{ u_i \mid \sigma_i > \delta \} \), for some threshold \( \delta \). However, ambiguity arises when \( \sigma_i \in (\epsilon_{\text{mach}}, \delta] \), where \( \epsilon_{\text{mach}} \) denotes the machine epsilon—the smallest number such that \( 1 + \epsilon_{\text{mach}} \neq 1 \) in floating-point arithmetic (typically around \( 2.22 \times 10^{-16} \) for double-precision). Singular values in this range may be inconsistently treated as either zero or nonzero by numerical solvers. Misinterpreting a near-zero singular value as positive can lead to overestimating the range space, potentially yielding an incorrect reduced face \( L^{\perp} \cap G^{\triangle} \subsetneq L^{\perp} \cap \mathbb{S}_+^n \).

To mitigate this issue, we propose augmenting the matrix \( V \) by including an additional feasible solution \( x_{k+1} \in P \) that increases an ambiguous singular value \( \sigma_r \). Let \( u_r = \begin{bmatrix} h_r \\ g_r \end{bmatrix} \) be the left singular vector of \( V \) corresponding to \( \sigma_r \). We solve:
\begin{equation}\label{mineig}
	\max \left\{ g_r^{\top} x \mid x \in P \right\},
\end{equation}
to find a new point \( x_{k+1} \) that aligns with direction \( g_r \), thus improving the conditioning of the extended matrix:
\[
\bar{V} := \frac{1}{\sqrt{k+2}} \begin{bmatrix}
	1 & \cdots & 1 & 1 \\
	x_0 & \cdots & x_k & x_{k+1}
\end{bmatrix}.
\]
The corresponding matrix \( \bar{X} = \bar{V} \bar{V}^{\top} \) remains feasible for $\conv(S)$, and results in the same face \( G \), but with potentially improved numerical stability.

We formalize this improvement below:

\begin{lemma}\label{improvecond}
\rvv{Let \( V \) be defined as in \eqref{Vdef}, and suppose \( \sigma_r(V) < 1 \) is the \( r \)-th singular value. Let \( u_r = \begin{bmatrix} h_r \\ g_r \end{bmatrix} \) be the corresponding left singular vector. Define:
$
	U := \frac{1}{\sqrt{k+2}} \begin{bmatrix}
		1 & \cdots & 1 & h_r \\
		x_0 & \cdots & x_k & g_r
	\end{bmatrix}.
$
	Then \( u_r \) is an eigenvector of \( UU^{\top} \) whose associated eigenvalue is strictly greater than \( \sigma_r^2(V) \).}
\end{lemma}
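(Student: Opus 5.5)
The plan is a direct computation. I would express $UU^{\top}$ in terms of $VV^{\top}$ and the rank-one matrix $u_ru_r^{\top}$, then apply it to $u_r$.

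First, compare the columns of $U$ with those of $V$ in \eqref{Vdef}. The first $k+1$ columns of $U$ are the columns $\begin{bmatrix} 1\\ x_i\end{bmatrix}$, scaled by $1/\sqrt{k+2}$ instead of $1/\sqrt{k+1}$. The last column is $u_r/\sqrt{k+2}$. Hence
\[
U=\tfrac{1}{\sqrt{k+2}}\bigl[\sqrt{k+1}\,V,\ u_r\bigr],
\]
and therefore
\[
UU^{\top}=\tfrac{1}{k+2}\bigl((k+1)VV^{\top}+u_ru_r^{\top}\bigr).
\]

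Second, use that $u_r$ is a unit left singular vector of $V$ with singular value $\sigma_r$. This gives $VV^{\top}u_r=\sigma_r^2u_r$ and $u_r^{\top}u_r=1$. Substituting,
\[
UU^{\top}u_r=\tfrac{(k+1)\sigma_r^2+1}{k+2}\,u_r .
\]
So $u_r$ is an eigenvector of $UU^{\top}$ with eigenvalue $\lambda=\frac{(k+1)\sigma_r^2+1}{k+2}$.

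Finally, compare $\lambda$ with $\sigma_r^2$. Multiplying by $k+2>0$, the inequality $\lambda>\sigma_r^2$ is equivalent to $(k+1)\sigma_r^2+1>(k+2)\sigma_r^2$, that is, $\sigma_r^2<1$. This follows from the hypothesis $0\le\sigma_r<1$.

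There is no real obstacle here. The only points needing care are the normalization of the singular vector ($\|u_r\|=1$, which holds by the standard SVD convention) and bookkeeping the differing scale factors $1/\sqrt{k+1}$ versus $1/\sqrt{k+2}$.
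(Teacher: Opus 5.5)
Your proposal is correct and follows essentially the same argument as the paper. Like the paper, you write $UU^{\top}=\tfrac{k+1}{k+2}VV^{\top}+\tfrac{1}{k+2}u_ru_r^{\top}$, apply it to the unit eigenvector $u_r$ to get the eigenvalue $\tfrac{(k+1)\sigma_r^2+1}{k+2}$, and note that this exceeds $\sigma_r^2$ exactly when $\sigma_r^2<1$.
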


\begin{proof}
	\rvv{Let \( X^* = VV^{\top} \) and \( Y^* = UU^{\top} \). By the definition of the singular value decomposition, \( u_r \) is an eigenvector of \( X^* \) corresponding to the eigenvalue \( \lambda_r(X^*) = \sigma_r^2(V) \). From the construction of \( U \), the matrix \( Y^* \) can be expressed as a rank-1 update of \( X^* \):
	\[
	Y^* = \frac{k+1}{k+2} X^* + \frac{1}{k+2} u_r u_r^{\top}.
	\]
	Since \( u_r \) is a unit vector and an eigenvector of \( X^* \), we examine the action of \( Y^* \) on \( u_r \):
	\[
	\begin{aligned}
		Y^* u_r &= \left( \frac{k+1}{k+2} X^* + \frac{1}{k+2} u_r u_r^{\top} \right) u_r \\
		&= \frac{k+1}{k+2} (X^* u_r) + \frac{1}{k+2} u_r (u_r^{\top} u_r) \\
		&= \frac{k+1}{k+2} \sigma_r^2(V) u_r + \frac{1}{k+2} u_r \\
		&= \left( \frac{(k+1)\sigma_r^2(V) + 1}{k+2} \right) u_r.
	\end{aligned}
	\]
	This demonstrates that \( u_r \) remains an eigenvector of \( Y^* \) with the associated eigenvalue
	\[
	\frac{(k+1)\sigma_r^2(V) + 1}{k+2}.
	\]
	Since \( \sigma_r(V) < 1 \), we have:
	\[
	\frac{(k+1)\sigma_r^2(V) + 1}{k+2} > \sigma_r^2(V) \iff (k+1)\sigma_r^2(V) + 1 > (k+2)\sigma_r^2(V) \iff 1 > \sigma_r^2(V),
	\]
	which holds by assumption. Thus, the eigenvalue of \( Y^* = UU^{\top} \) in the direction \( u_r \) is strictly greater than \( \sigma_r^2(V) \).}
\end{proof}

Of course, the vector \( g_r \) in Lemma~\ref{improvecond} may not lie in \( P \). In practice, we approximate this direction by using a feasible solution \( x \) that maximizes \( g_r^T x \). This heuristic motivates the formulation in \eqref{mineig}. Although we do not have a formal bound for \eqref{mineig}, our experiments consistently show that this approach improves the conditioning of \( V \)—increasing the target singular value and reducing ambiguity in rank determination.

In rare cases where ambiguous singular values persist—indicating either the failure of the above strategy or an ill-conditioned feasible set \( P \)—we simply treat all such singular values as zero. This leads to the following inner approximation: $\bar{G} := \operatorname{span} \left\{ u_i \mid \sigma_i(V) > \bar{\delta} \right\},$
where $\bar{\delta}$ is chosen conservatively, e.g., \( \bar{\delta} \in [10^{-5}, 10^{-3}] \).  Since \( \bar{G} \subseteq G \), it follows that \( \bar{G}^\triangle \supseteq G^\triangle \), and therefore: $
L^\perp \cap G^\triangle \subseteq L^\perp \cap \bar{G}^\triangle.$
With a sufficiently large \( \bar{\delta} \), we can confidently conclude: $
L^\perp \cap \bar{G}^\triangle = L^\perp \cap \mathbb{S}_+^n.
$

In practice, the number of ambiguous singular values is typically zero or very small, and the proposed inner approximation \( \bar{G} \) provides a robust and reliable implementation of the primal FRA, while preserving nearly all of its reduction capability.
\begin{remark}
	While \( \bar{\delta} \) can improve numerical robustness in some cases, we did not enable this feature in our implementation, as we applied the same settings uniformly across all problem instances without instance-specific tuning. 
\end{remark}
\begin{sloppypar}
\begin{remark}
		Note that we only claim the primal FRA does not introduce additional numerical issues. If an SDP problem is ill-posed and numerical difficulties arise in the standard FRA—specifically, in Line~3 of Algorithm~\ref{alg1}—then similar numerical issues are likely to occur when determining the face \( F \) in the primal FRA formulation~\eqref{newF}. 
		
		The key advantage of the primal FRA lies in its ability to compute the exposing vector more efficiently, without introducing new sources of numerical instability. However, it does not eliminate existing numerical challenges inherent to the original problem.
\end{remark}
\end{sloppypar}

\section{Relation to Other Approaches}

In this section, we discuss the relationship between the primal FRA and several alternative FRAs. The primal FRA is highly compatible with many of these alternatives, allowing us to combine their advantages rather than choosing a single method. We illustrate this synergy using partial FRA \cite{permenter2018partial} as an example. We begin with a brief overview of the key ideas behind partial FRA and then describe how it integrates with the primal FRA. We also compare the primal FRA with the affine FRA proposed in \cite{hu2023affine}, highlighting important distinctions and advantages.

Partial FRA replaces the positive semidefinite cone \(\mathbb{S}_+^n\) with an inner approximation \(K \subseteq \mathbb{S}_+^n\) such that \(L^{\perp} \cap K \subseteq L^{\perp} \cap \mathbb{S}_+^n\). By choosing \(K\) to be computationally simpler, one obtains a more practical method for identifying an exposing vector. A widely used choice is the cone of diagonally dominant matrices, defined by: $
\mathcal{DD}^n := \left\{ X \in \mathbb{S}^n \;\middle|\; X_{ii} \geq \sum_{j \neq i} |X_{ij}| \right\}.$
Since \(\mathcal{DD}^n \subseteq \mathbb{S}_+^n\), searching for a nonzero matrix in \(L^{\perp} \cap \mathcal{DD}^n\) reduces to solving a linear program. The trade-off is that \(L^{\perp} \cap \mathcal{DD}^n\) may contain only the zero matrix, even when \(L^{\perp} \cap \mathbb{S}_+^n\) contains a nonzero exposing vector—resulting in failure to detect a valid reduction.

In contrast, the primal FRA identifies a proper face \(G^{\triangle}\) such that \(L^{\perp} \cap \mathbb{S}_+^n = L^{\perp} \cap G^{\triangle}\), thereby reducing the dimension of the FR auxiliary problem. These two methods can be effectively combined. One may first apply the primal FRA to obtain a reformulation over a smaller face \(G^{\triangle}\), and then apply partial FRA to construct an inner approximation \(\tilde{K} \subseteq G^{\triangle}\). Since \(G^{\triangle}\) is a proper face of \(\mathbb{S}_+^n\), the resulting \(\tilde{K}\) is lower-dimensional and more manageable than an inner approximation over the full cone. For example, if
$
G^{\triangle} = \left\{ [ \begin{smallmatrix}
	R & 0 \\
	0 & 0
\end{smallmatrix} ] \;\middle|\; R \in \mathbb{S}_+^r \right\}, r < n$,
then a natural inner approximation is
$
\tilde{K} = \left\{  [ \begin{smallmatrix}
	R & 0 \\
	0 & 0
\end{smallmatrix} ]  \;\middle|\; R \in \mathcal{DD}^r \right\}.
$
This demonstrates how primal FRA and partial FRA can be seamlessly combined, benefiting from both reduction quality and computational efficiency. The same strategy applies to other methods such as Sieve-SDP \cite{zhu2019sieve} and the preprocessing approach in \cite{cheung2013preprocessing}.

The affine FRA proposed in \cite{hu2023affine} adopts a different perspective: it identifies implicit equalities by analyzing the affine hull of the feasible region \(P\) through its LP relaxation. This yields an affine subspace containing \(P\), although not necessarily the minimal one. The affine FRA performs well in practice, particularly on benchmark instances from MIPLIB. It often achieves the same reduction as the primal FRA, sometimes with less computational effort. A notable limitation of the affine FRA is its inability to verify whether Slater's condition holds for the reduced problem. Without this verification, the reliability of the obtained numerical optimal solution remains uncertain. In contrast, while the primal FRA does not universally guarantee strict feasibility, it explicitly attempts to verify Slater's condition. As demonstrated in our numerical experiments, the primal FRA successfully confirms that Slater's condition holds for most instances. This capability provides a critical advantage in applications where numerical precision and stability are paramount.

\section{Numerical Experiments}\label{sec_num}
\subsection{Settings}
\textbf{Experimental Settings:} All experiments were conducted on a Mac Studio (2023) equipped with an Apple M2 Ultra chip, 128 GB of RAM, and running macOS 14.1.1 (build 23B81). We generated MBQP problem instances based on benchmark instances from MIPLIB 2017~\cite{MIPLIB2017}, specifically those categorized under ``collection.'' The instances were imported into MATLAB R2023b using Gurobi (version 10.0.1)~\cite{gurobi}.  We also used Gurobi to solve problem~\eqref{maxmin}, which generates feasible solutions within Algorithm~\ref{alg3}. The SDP problems were solved using the MOSEK solver in~\cite{aps2019mosek} via the MATLAB interface YALMIP~\cite{lofberg2004yalmip}.

\textbf{Problem Generation:} The MIPLIB provides MILP problems whose feasible regions are of the same form as in~\eqref{bqpf}. In addition, MIPLIB specifies a linear objective function \( c^{T}x \) for some vector \( c \in \mathbb{R}^n \). To construct MBQP instances, we generate a random symmetric quadratic cost matrix \( Q \in \mathbb{S}^n \), where the entries are independently drawn from a standard normal distribution. This results in an MBQP problem of the form~\eqref{bqp}. Note that the objective function does not affect facial reduction; it is used only when solving the original SDP problem in the Appendix.

\textbf{SDP Relaxations:} For small to moderate problem instances, we apply the standard FRA and the primal FRA to the following three SDP relaxations:

 \textbf{1. Shor's Relaxation.}	Define the matrices: $\tilde{Q}:= [\begin{smallmatrix}
		0 & \frac{1}{2}c^{\top} \\
		\frac{1}{2}c & Q
	\end{smallmatrix}] \in \mathbb{S}^{n+1} \text{ and } A_i := [\begin{smallmatrix}
		0 & \frac{1}{2}a_i^{\top} \\
		\frac{1}{2}a_i & \mathbf{O}
\end{smallmatrix}] \in \mathbb{S}^{n+1}, \quad i = 1, \ldots, m.$
	\rvv{This coincides with the lifted objective matrix in \eqref{sdprelaxopt}. Since facial reduction depends only on the feasible set, the preprocessing experiments are unchanged by the choice of objective; we include the full \( \tilde{Q} \) for consistency with the general SDP framework and the supplementary solves in Appendix~\ref{sdpnum}.}
	Recall that $\arrow$ is the linear operator defined in \eqref{def_arrow}. Then the relaxation is given by:
	\begin{align}
		\min \quad & \langle \tilde{Q}, Y \rangle \notag \\
		\text{s.t.} \quad
		& \langle A_i, Y \rangle = b_i, \quad i = 1, \ldots, p \label{con:ineq} \\
		& \langle A_i, Y \rangle \leq b_i, \quad i = p+1, \ldots, m \label{con:eq} \\
		& \arrow(Y) = e_0 \label{con:arrow} \\
		& Y \in \mathbb{S}_+^{n+1}. \label{con:psd}
	\end{align}

\textbf{2. Doubly Nonnegative (DNN) Relaxation.} Define $\bar{A}_i := [\begin{smallmatrix}
		-b_i \\
		a_i
	\end{smallmatrix}]
	[\begin{smallmatrix}
		-b_i & a_i^{\top}
	\end{smallmatrix}] \in \mathbb{S}^{n+1}.$
	In many problems, the inequality constraints include variable bounds \( l \leq x \leq u \). We can derive  additional constraints based on the Reformulation-Linearization Technique (RLT) \cite{sherali1990hierarchy,sherali1994hierarchy}. This yields the following SDP relaxation:
	\begin{equation*}
		\begin{array}{rll}
			\min \quad & \langle \tilde{Q}, Y \rangle \\
			\text{s.t.} \quad
			& Y \text{ satisfies } \eqref{con:ineq}, \eqref{con:eq}, \eqref{con:arrow}, \eqref{con:psd} \\
			& \langle \bar{A}_i, Y \rangle = 0, \quad i = 1, \ldots, p \\
			& l_i l_j - Y_{0i}l_j - Y_{0j}l_i + Y_{ij} \geq 0, \quad \forall i,j \\
			& u_i u_j - Y_{0i}u_j - Y_{0j}u_i + Y_{ij} \geq 0, \quad \forall i,j.
		\end{array}
	\end{equation*}
	When the problem contains only binary variables, the matrix \( Y \) is both positive semidefinite and entrywise nonnegative; hence, we refer to this as the \emph{doubly nonnegative (DNN)} relaxation.
	
\textbf{3. A Variant of Shor's Relaxation.}  For large-scale instances, solving Shor's or DNN relaxations may be computationally prohibitive. We consider a special case in which quadratic costs appear only between binary variables, i.e., $Q = [\begin{smallmatrix}
		Q_{B} & \textbf{O} \\
		\textbf{O} & \textbf{O} \\
	\end{smallmatrix}] \text{ for some } Q_B \in \mathbb{S}^r.$
	Here, \( Q_B \in \mathbb{S}^r \) represents the quadratic cost matrix for binary variables.  Let the first \( r \) variables be binary, and decompose: $x = [\begin{smallmatrix}
		\tilde{x} \\
		\hat{x}
	\end{smallmatrix}]$, where $\tilde{x} \in \mathbb{R}^r$ and $\hat{x} \in \mathbb{R}^{n - r}$.
	Then the MBQP  in \eqref{bqp} can be written as:
	\begin{equation}\label{bqp_bq}
		\min \left\{ \tilde{x}^{\top} Q_B \tilde{x} + c^{\top}x \,\middle|\, x \in P \right\}.
	\end{equation}
	Let \( \tilde{a}_i \in \mathbb{R}^r \), \( \hat{a}_i \in \mathbb{R}^{n - r} \), \( \tilde{c} \in \mathbb{R}^r \) and \( \hat{c} \in \mathbb{R}^{n - r} \) be defined analogously. Define: $\tilde{Q}:= [\begin{smallmatrix}
		0 & \tilde{c}^{T}\\
		\tilde{c} & Q_{B}
	\end{smallmatrix}] \in \mathbb{S}^{r+1} \text{ and } 
	\tilde{A}_i := [\begin{smallmatrix}
		0 & \frac{1}{2} \tilde{a}_i^{\top} \\
		\frac{1}{2} \tilde{a}_i & \mathbf{O}
	\end{smallmatrix}] \in \mathbb{S}^{r+1}.$
	The following is an SDP relaxation for \eqref{bqp_bq}:
	\begin{equation}
		\begin{array}{rll}
			\min \quad & \langle \tilde{Q}, X \rangle + \hat{c}^{\top} \hat{x} \\
			\text{s.t.} \quad
			& \langle \tilde{A}_i, X \rangle + \hat{a}_i^{\top} \hat{x} = b_i, \quad i = 1, \ldots, p \\
			& \langle \tilde{A}_i, X \rangle + \hat{a}_i^{\top} \hat{x} \leq b_i, \quad i = p+1, \ldots, m \\
			& \arrow(X) = e_0 \\
			& X \in \mathbb{S}_+^{r+1}.
		\end{array}
	\end{equation}
	Here, the matrix variable \( X \) is only of order \( r+1 \), which is typically much smaller than \( n+1 \), leading to a significant reduction in computational cost. This significantly reduces computational cost and enables the testing of larger problem instances. We will call \eqref{bqp_bq} a \emph{variant of Shor's relaxation}. To the best of our knowledge, the variant of Shor's relaxation is new in the literature.

\textbf{Facial Reduction:} In our computational result, we only apply the first iteration of the FRA. There are three main reasons for this choice. First, the backward stability of a single iteration of facial reduction has been established in \cite{cheung2013preprocessing}. This implies that the computed solution to the reduced problem is the exact solution to a problem that is arbitrarily close to the original one, ensuring numerical reliability. In contrast, it remains an open question whether the same guarantee holds for subsequent iterations. Second, as a preprocessing algorithm, FRA should adhere to the principle of being \emph{simple and quick}, as advocated by \cite{andersen1995presolving}. Performing a second iteration of facial reduction is often too computationally expensive to be practical in this context. Third, the primal FRA is highly effective in practice, often restoring Slater's condition immediately after just one iteration for the majority of benchmark instances. \rvv{In the rare cases where a single primal FRA iteration reduces the problem but does not fully restore Slater's condition, one may continue with a subsequent iteration. Since the primal sampling heuristic does not generate additional affinely independent feasible points beyond those already found, one must instead switch to a standard FRA step (Algorithm~\ref{alg1}) applied to the already reduced face $G$, rather than perform another primal iteration. This auxiliary problem is strictly smaller than the original, and by the finite convergence of FRA \cite{borwein1981facial,liu2018exact,pataki2013strong,waki2013facial}, alternating with standard FRA iterations is guaranteed to restore Slater's condition in finitely many steps. We note, however, that severe ill-conditioning specific to the geometry of the reduced face may still persist.}

\subsection{Results and Discussion}

We report the numerical performance of the standard FRA and the proposed primal FRA on a diverse set of benchmark instances. We begin by presenting and discussing overall performance statistics to highlight the key advantages of the primal FRA. Detailed numerical results for individual problem instances follow, providing a deeper analysis of algorithmic behavior across different SDP relaxations.

Table~\ref{tab:summary} provides an aggregated comparison between the standard FRA and the proposed primal FRA across three types of SDP relaxations and a total of 157 \rvv{instance--relaxation pairs (arising from 91 distinct MIPLIB instances, each tested under one or more of the three relaxation types)}. The following key performance metrics are reported:

\begin{enumerate}
	\item \textbf{Total Time}: The total computation time in seconds required to perform FRA across all instances. (If the standard FR auxiliary problem could not be solved due to memory limitations, we exclude the computational time for both FRA methods.) The primal FRA achieves a substantial reduction in total computation time—from $31,\!876$ seconds to $6,\!975$ seconds—yielding a fourfold speedup.
	
	\item \textbf{Number of Successful Solves}: The number of instances for which the FR auxiliary problem was successfully solved. The primal FRA achieves a higher success rate ($146$ out of $157$) compared to the standard FRA ($120$ out of $157$), demonstrating its improved robustness.
	
	\item \textbf{Slater Condition Restored}: The number of instances in which Slater’s condition was successfully restored. For the standard FRA, we apply one step of Algorithm~\ref{alg1}. The primal FRA restores Slater’s condition in $145$ instances, while the standard FRA does so in only $108$, further demonstrating the effectiveness of the primal approach.
	
	\item \textbf{Average Size Ratio}: For each instance, we compute the ratio between the matrix variable size in the reduced FR auxiliary problem from the primal FRA and that from the standard FRA. The reported value is the average ratio across all instances. On average, the matrix variable in the primal FRA is approximately $9\%$ the size of that in the standard FRA, indicating a substantial reduction in problem size.
\end{enumerate}

These results show that the primal FRA not only improves computational efficiency but also enhances solver reliability and produces significantly more compact SDP formulations. This makes the primal FRA a promising alternative to the standard FRA for solving large-scale semidefinite relaxations. Detailed numerical results are provided in Appendix~\ref{num_details}.

\begin{table}[H]
	\centering
	\small
	\begin{tabular}{c|cc} \hline
		\textbf{Metric} & \textbf{Standard FRA} & \textbf{Primal FRA} \\ \hline
		Total Time                                   &    $31876$   &    $6975$    \\ 
		Successful Solves (Count)                    &    $120$     &    $146$     \\ 
		Slater Condition Restored (Count)            &    $108$     &    $145$     \\ 
		Average Size Ratio                           &    $1$       &    $0.09179$ \\ 
		\hline 
	\end{tabular}
	\caption{Summary of total time, number of successful solves, and Slater condition restorations over 157 instance--relaxation pairs\rvv{, arising from 91 distinct MIPLIB instances tested across three relaxation types}.}
	\label{tab:summary}
\end{table}

\section{Conclusion}

We have proposed a novel facial reduction algorithm (FRA) tailored to semidefinite relaxations of combinatorial optimization problems. Unlike traditional approaches that require solving computationally intensive FR auxiliary problems, the primal FRA formulates a reduced FR auxiliary problem that is significantly more tractable and efficient.

The proposed method integrates seamlessly with other specialized FRAs, such as partial FRA and Sieve-SDP, enabling complementary preprocessing strategies. Moreover, it exhibits strong numerical robustness, successfully addressing instances where the standard FRA fails due to numerical instability or memory limitations.

Extensive computational experiments demonstrate that the primal FRA substantially reduces the size and complexity of the FR auxiliary problem, restores Slater's condition in the majority of benchmark instances, and improves solver reliability—all while incurring minimal preprocessing overhead. These results underscore the practical value of the primal FRA as an effective and scalable tool for enhancing the performance of SDP relaxations in combinatorial optimization.

Future research could explore the application of Primal FRA within a branch and bound framework. In such algorithms, child nodes are generated by fixing specific variables to zero or one, creating subproblems whose feasible regions are closely connected to that of the root node. Investigating how to leverage this structural connection to efficiently apply Primal FRA to child nodes could allow for the efficient regularization of SDP relaxations throughout the entire search tree.

\bibliographystyle{siam}
\bibliography{literature}

\begin{thebibliography}{10}

\bibitem{andersen1995presolving}
{\sc E.~D. Andersen and K.~D. Andersen}, {\em Presolving in linear
  programming}, Mathematical Programming, 71 (1995), pp.~221--245.

\bibitem{anjos2011handbook}
{\sc M.~F. Anjos and J.~B. Lasserre}, {\em Handbook on semidefinite, conic and
  polynomial optimization}, vol.~166, Springer Science \& Business Media, 2011.

\bibitem{anstreicher2021testing}
{\sc K.~M. Anstreicher}, {\em Testing copositivity via mixed--integer linear
  programming}, Linear Algebra and Its Applications, 609 (2021), pp.~218--230.

\bibitem{aps2019mosek}
{\sc M.~ApS}, {\em Mosek optimization toolbox for {MATLAB}}, User's Guide and
  Reference Manual, version, 4 (2019).

\bibitem{araujo2023quantum}
{\sc M.~Ara{\'u}jo, M.~Huber, M.~Navascu{\'e}s, M.~Pivoluska, and A.~Tavakoli},
  {\em Quantum key distribution rates from semidefinite programming}, Quantum,
  7 (2023), p.~1019.

\bibitem{bertsimas2019machine}
{\sc D.~Bertsimas and J.~Dunn}, {\em Machine learning under a modern
  optimization lens}, Dynamic Ideas LLC Waltham, 2019.

\bibitem{bertsimas2016best}
{\sc D.~Bertsimas, A.~King, and R.~Mazumder}, {\em Best subset selection via a
  modern optimization lens}, The Annals of Statistics, 44 (2016), pp.~813--852.

\bibitem{biswas2006semidefinite}
{\sc P.~Biswas, T.-C. Liang, K.-C. Toh, Y.~Ye, and T.-C. Wang}, {\em
  Semidefinite programming approaches for sensor network localization with
  noisy distance measurements}, IEEE transactions on automation science and
  engineering, 3 (2006), pp.~360--371.

\bibitem{borwein1981regularizing}
{\sc J.~Borwein and H.~Wolkowicz}, {\em Regularizing the abstract convex
  program}, Journal of Mathematical Analysis and Applications, 83 (1981),
  pp.~495--530.

\bibitem{borwein1981facial}
{\sc J.~M. Borwein and H.~Wolkowicz}, {\em Facial reduction for a cone-convex
  programming problem}, Journal of the Australian Mathematical Society, 30
  (1981), pp.~369--380.

\bibitem{burkard1984quadratic}
{\sc R.~E. Burkard}, {\em Quadratic assignment problems}, European Journal of
  Operational Research, 15 (1984), pp.~283--289.

\bibitem{cheung2013preprocessing}
{\sc Y.-L. Cheung, S.~Schurr, and H.~Wolkowicz}, {\em Preprocessing and
  regularization for degenerate semidefinite programs}, in Computational and
  Analytical Mathematics: In Honor of Jonathan Borwein's 60th Birthday,
  Springer, 2013, pp.~251--303.

\bibitem{cordone2012solving}
{\sc R.~Cordone and G.~Passeri}, {\em Solving the quadratic minimum spanning
  tree problem}, Applied Mathematics and Computation, 218 (2012),
  pp.~11597--11612.

\bibitem{de2011lasserre}
{\sc E.~De~Klerk and M.~Laurent}, {\em On the lasserre hierarchy of
  semidefinite programming relaxations of convex polynomial optimization
  problems}, SIAM Journal on Optimization, 21 (2011), pp.~824--832.

\bibitem{de2024spanning}
{\sc F.~de~Meijer, M.~Siebenhofer, R.~Sotirov, and A.~Wiegele}, {\em Spanning
  and splitting: Integer semidefinite programming for the quadratic minimum
  spanning tree problem}, arXiv preprint arXiv:2410.04997,  (2024).

\bibitem{fawzi2018efficient}
{\sc H.~Fawzi and O.~Fawzi}, {\em Efficient optimization of the quantum
  relative entropy}, Journal of Physics A: Mathematical and Theoretical, 51
  (2018), p.~154003.

\bibitem{fawzi2023optimal}
{\sc H.~Fawzi and J.~Saunderson}, {\em Optimal self-concordant barriers for
  quantum relative entropies}, SIAM Journal on Optimization, 33 (2023),
  pp.~2858--2884.

\bibitem{finke1987quadratic}
{\sc G.~Finke, R.~E. Burkard, and F.~Rendl}, {\em Quadratic assignment
  problems}, in North-Holland Mathematics Studies, vol.~132, Elsevier, 1987,
  pp.~61--82.

\bibitem{freund1985identifying}
{\sc R.~M. Freund, R.~Roundy, and M.~J. Todd}, {\em Identifying the set of
  always-active constraints in a system of linear inequalities by a single
  linear program},  (1985).

\bibitem{friberg2016facial}
{\sc H.~A. Friberg}, {\em Facial reduction heuristics and the motivational
  example of mixed-integer conic optimization}, Optimization online, 5 (2016),
  p.~14.

\bibitem{fukuda2016lecture}
{\sc K.~Fukuda}, {\em Lecture: Polyhedral computation, spring 2016}, Institute
  for Operations Research and Institute of Theoretical Computer Science, ETH
  Zurich. https://inf. ethz.
  ch/personal/fukudak/lect/pclect/notes2015/PolyComp2015. pdf,  (2016).

\bibitem{MIPLIB2017}
{\sc A.~Gleixner, G.~Hendel, G.~Gamrath, T.~Achterberg, M.~Bastubbe,
  T.~Berthold, P.~M. Christophel, K.~Jarck, T.~Koch, J.~Linderoth,
  M.~L\"ubbecke, H.~D. Mittelmann, D.~Ozyurt, T.~K. Ralphs, D.~Salvagnin, and
  Y.~Shinano}, {\em {MIPLIB 2017: Data-Driven Compilation of the 6th
  Mixed-Integer Programming Library}}, Mathematical Programming Computation,
  (2021).

\bibitem{goemans1995improved}
{\sc M.~X. Goemans and D.~P. Williamson}, {\em Improved approximation
  algorithms for maximum cut and satisfiability problems using semidefinite
  programming}, Journal of the ACM (JACM), 42 (1995), pp.~1115--1145.

\bibitem{grotschel2012geometric}
{\sc M.~Gr{\"o}tschel, L.~Lov{\'a}sz, and A.~Schrijver}, {\em Geometric
  algorithms and combinatorial optimization}, vol.~2, Springer Science \&
  Business Media, 2012.

\bibitem{gurobi}
{\sc {Gurobi Optimization, LLC}}, {\em {Gurobi Optimizer Reference Manual}},
  2023.

\bibitem{hu2022robust}
{\sc H.~Hu, J.~Im, J.~Lin, N.~L{\"u}tkenhaus, and H.~Wolkowicz}, {\em Robust
  interior point method for quantum key distribution rate computation},
  Quantum, 6 (2022), p.~792.

\bibitem{hu2023affine}
{\sc H.~Hu and B.~Yang}, {\em Affine {FR}: an effective facial reduction
  algorithm for semidefinite relaxations of combinatorial problems}.
\newblock 2023.

\bibitem{hu2016note}
{\sc J.~Hu, B.~Jiang, X.~Liu, and Z.~Wen}, {\em A note on semidefinite
  programming relaxations for polynomial optimization over a single sphere},
  Science China Mathematics, 59 (2016), pp.~1543--1560.

\bibitem{jibetean2005semidefinite}
{\sc D.~Jibetean and M.~Laurent}, {\em Semidefinite approximations for global
  unconstrained polynomial optimization}, SIAM Journal on Optimization, 16
  (2005), pp.~490--514.

\bibitem{karimi2025efficient}
{\sc M.~Karimi and L.~Tuncel}, {\em Efficient implementation of interior-point
  methods for quantum relative entropy}, INFORMS Journal on Computing, 37
  (2025), pp.~3--21.

\bibitem{kelly2024applications}
{\sc S.~Kelly}, {\em Applications of Conic Programming Reformulations}, PhD
  thesis, Clemson University, 2024.

\bibitem{kim2009exploiting}
{\sc S.~Kim, M.~Kojima, and H.~Waki}, {\em Exploiting sparsity in sdp
  relaxation for sensor network localization}, SIAM Journal on Optimization, 20
  (2009), pp.~192--215.

\bibitem{krislock2010explicit}
{\sc N.~Krislock and H.~Wolkowicz}, {\em Explicit sensor network localization
  using semidefinite representations and facial reductions}, SIAM Journal on
  Optimization, 20 (2010), pp.~2679--2708.

\bibitem{liu2018exact}
{\sc M.~Liu and G.~Pataki}, {\em Exact duals and short certificates of
  infeasibility and weak infeasibility in conic linear programming},
  Mathematical Programming, 167 (2018), pp.~435--480.

\bibitem{lofberg2004yalmip}
{\sc J.~Lofberg}, {\em Yalmip: A toolbox for modeling and optimization in
  matlab}, in 2004 IEEE international conference on robotics and automation
  (IEEE Cat. No. 04CH37508), IEEE, 2004, pp.~284--289.

\bibitem{nesterov1994interior}
{\sc Y.~Nesterov and A.~Nemirovskii}, {\em Interior-point polynomial algorithms
  in convex programming}, SIAM, 1994.

\bibitem{oncan2010quadratic}
{\sc T.~{\"O}ncan and A.~P. Punnen}, {\em The quadratic minimum spanning tree
  problem: A lower bounding procedure and an efficient search algorithm},
  Computers \& Operations Research, 37 (2010), pp.~1762--1773.

\bibitem{papp2017semi}
{\sc D.~Papp}, {\em Semi-infinite programming using high-degree polynomial
  interpolants and semidefinite programming}, SIAM Journal on Optimization, 27
  (2017), pp.~1858--1879.

\bibitem{pataki2013strong}
{\sc G.~Pataki}, {\em Strong duality in conic linear programming: facial
  reduction and extended duals}, in Computational and Analytical Mathematics:
  In Honor of Jonathan Borwein's 60th Birthday, Springer, 2013, pp.~613--634.

\bibitem{pereira2015lower}
{\sc D.~L. Pereira, M.~Gendreau, and A.~S. Da~Cunha}, {\em Lower bounds and
  exact algorithms for the quadratic minimum spanning tree problem}, Computers
  \& Operations Research, 63 (2015), pp.~149--160.

\bibitem{permenter2018partial}
{\sc F.~Permenter and P.~Parrilo}, {\em Partial facial reduction: simplified,
  equivalent sdps via approximations of the psd cone}, Mathematical
  Programming, 171 (2018), pp.~1--54.

\bibitem{schrijver1998theory}
{\sc A.~Schrijver}, {\em Theory of linear and integer programming}, John Wiley
  \& Sons, 1998.

\bibitem{sherali1990hierarchy}
{\sc H.~D. Sherali and W.~P. Adams}, {\em A hierarchy of relaxations between
  the continuous and convex hull representations for zero-one programming
  problems}, SIAM Journal on Discrete Mathematics, 3 (1990), pp.~411--430.

\bibitem{sherali1994hierarchy}
\leavevmode\vrule height 2pt depth -1.6pt width 23pt, {\em A hierarchy of
  relaxations and convex hull characterizations for mixed-integer zero—one
  programming problems}, Discrete Applied Mathematics, 52 (1994), pp.~83--106.

\bibitem{sotirov2024quadratic}
{\sc R.~Sotirov and Z.~Verch{\'e}re}, {\em The quadratic minimum spanning tree
  problem: lower bounds via extended formulations}, Vietnam Journal of
  Mathematics,  (2024), pp.~1--22.

\bibitem{sundar2010swarm}
{\sc S.~Sundar and A.~Singh}, {\em A swarm intelligence approach to the
  quadratic minimum spanning tree problem}, Information Sciences, 180 (2010),
  pp.~3182--3191.

\bibitem{tavakoli2024semidefinite}
{\sc A.~Tavakoli, A.~Pozas-Kerstjens, P.~Brown, and M.~Ara{\'u}jo}, {\em
  Semidefinite programming relaxations for quantum correlations}, Reviews of
  Modern Physics, 96 (2024), p.~045006.

\bibitem{tunccel2001slater}
{\sc L.~Tun{\c{c}}el}, {\em On the slater condition for the sdp relaxations of
  nonconvex sets}, Operations Research Letters, 29 (2001), pp.~181--186.

\bibitem{waki2006sums}
{\sc H.~Waki, S.~Kim, M.~Kojima, and M.~Muramatsu}, {\em Sums of squares and
  semidefinite program relaxations for polynomial optimization problems with
  structured sparsity}, SIAM Journal on Optimization, 17 (2006), pp.~218--242.

\bibitem{waki2013facial}
{\sc H.~Waki and M.~Muramatsu}, {\em Facial reduction algorithms for conic
  optimization problems}, Journal of Optimization Theory and Applications, 158
  (2013), pp.~188--215.

\bibitem{wiegele2022sdp}
{\sc A.~Wiegele and S.~Zhao}, {\em Sdp-based bounds for graph partition via
  extended admm}, Computational Optimization and Applications, 82 (2022),
  pp.~251--291.

\bibitem{wolkowicz2012handbook}
{\sc H.~Wolkowicz, R.~Saigal, and L.~Vandenberghe}, {\em Handbook of
  semidefinite programming: theory, algorithms, and applications}, vol.~27,
  Springer Science \& Business Media, 2012.

\bibitem{wolkowicz1999semidefinite}
{\sc H.~Wolkowicz and Q.~Zhao}, {\em Semidefinite programming relaxations for
  the graph partitioning problem}, Discrete Applied Mathematics, 96 (1999),
  pp.~461--479.

\bibitem{zhout1998effective}
{\sc G.~Zhout and M.~Gen}, {\em An effective genetic algorithm approach to the
  quadratic minimum spanning tree problem}, Computers \& Operations Research,
  25 (1998), pp.~229--237.

\bibitem{zhu2019sieve}
{\sc Y.~Zhu, G.~Pataki, and Q.~Tran-Dinh}, {\em Sieve-{SDP}: a simple facial
  reduction algorithm to preprocess semidefinite programs}, Mathematical
  Programming Computation, 11 (2019), pp.~503--586.

\end{thebibliography}
\addcontentsline{toc}{section}{Bibliography}

\newpage
\appendix

	\section{Illustrative Toy Example}\label{toy_example}
	Consider the following binary feasible set with two binary variables $x \in \{0,1\}^2$:
	\begin{equation}\label{toy}
		P = \{ x \in \{0,1\}^2 \mid x_1 + x_2 = 1 \}.
	\end{equation}
	The feasible set contains exactly two points: $v_1 = (1,0)$ and $v_2 = (0,1)$. Note that we do not introduce an objective function in this example, as it does not play any role in facial reduction.
	
	\paragraph{SDP Relaxation.}
	We consider the following SDP relaxation. Define the affine subspace
	$$ L := \{ Y \in \mathbb{S}^{3} \mid \operatorname{arrow}(Y) = e_{0}, a^{T}Ya = 0 \}, $$
	where $a := (-1, 1, 1)^\top$ and $\operatorname{arrow}$ is defined in \eqref{def_arrow}. Then $L \cap \mathbb{S}_{+}^{3}$ is an SDP relaxation for \eqref{toy}, i.e., $S \subset L \cap \mathbb{S}_{+}^{3}$, where $S$ is the lifted feasible set defined in \eqref{liftedP}. 
	
	It is straightforward to verify that this SDP relaxation \textbf{does not satisfy Slater's condition}, as $a^{T}Ya > 0$ for any positive definite $Y$. The standard FRA requires us to find an exposing vector from the following set:
	$$ L^{\perp} \cap \mathbb{S}_{+}^{3} = \{ \operatorname{arrow}^{*}(y) + zaa^{T} \in \mathbb{S}_{+}^{3} \mid e_{0}^{T}y = 0 , y \in \mathbb{R}^{3}, z \in \mathbb{R} \}. $$
	In general, this amounts to solving an SDP feasibility problem with a matrix variable of order $3$. 
	
	We clarify the behavior of the primal FRA depending on the number of binary feasible solutions generated by Algorithm~\ref{alg3}:
	\begin{enumerate}
		\item \textbf{Algorithm~\ref{alg3} generates both feasible solutions $v_1 = (1,0)$ and $v_2 = (0,1)$ and terminates at Line 7.} In this case, we immediately conclude that the minimal face of $\mathbb{S}_{+}^{3}$ containing $S$ is given by 
		$$ F := \left\{ Y \in \mathbb{S}_{+}^{3} \mid \operatorname{range}(Y) \subseteq \operatorname{span}\left\{ \begin{bmatrix} 1\\ v_{1} \end{bmatrix}, \begin{bmatrix} 1\\ v_{2} \end{bmatrix} \right\} \right\}. $$
		The facially reduced formulation is $L \cap F$, which is strictly feasible, and the order of its matrix variable is exactly $2$. We emphasize that we do not need to solve an SDP auxiliary problem to perform this reduction.
		
		\item \textbf{Algorithm~\ref{alg3} generates a single feasible solution $v_1 = (1,0)$ and terminates at Line 9.} Define $G$ as in \eqref{newG}. Then we have the complementary face:
		$$ G^{\triangle} = \left\{ W \in \mathbb{S}_+^3 \mid \operatorname{range}(W) \subseteq \operatorname{span}\left\{ \begin{bmatrix} 1\\ -1\\ 0 \end{bmatrix}, \begin{bmatrix} 0\\ 0\\ 1 \end{bmatrix} \right\} \right\}. $$
		Now we can simplify the FR auxiliary problem $L^{\perp} \cap \mathbb{S}_{+}^{3}$ by finding an appropriate feasible solution in $L^{\perp} \cap G^{\triangle}$. The latter is an SDP feasibility problem where the order of the matrix variable is essentially $2$, which is one less than that of the original FR auxiliary problem. 
		
		\item \textbf{Algorithm~\ref{alg3} fails to generate any feasible solutions.} In this worst-case scenario, the primal FRA fails and provides no reduction.
	\end{enumerate}

	\section{The detail numerical results}\label{num_details}

	Tables~\ref{tab:dnn} to~\ref{tab:vshor2} present detailed experimental results. Specifically, Table~\ref{tab:dnn} reports results for the DNN relaxation, while Table~\ref{tab:shor} covers the standard Shor's relaxation; both relaxations lift binary and continuous variables into a single matrix variable. In contrast, Tables~\ref{tab:vshor1} and~\ref{tab:vshor2} present results for the variant of Shor's relaxation defined in \eqref{bqp_bq}, which lifts only the binary variables, yielding a significantly smaller SDP relaxation.

	For each instance, we report the following information:
	
	\begin{enumerate}
		\item \textbf{Aux. Size}: The order of the matrix variable in the FR auxiliary problem. For the standard FRA, this is equal to the matrix variable order in the original SDP relaxation. For the primal FRA, it corresponds to the matrix variable order in the reduced FR auxiliary problem. A value of zero indicates that the primal FRA terminated at Line~3 of Algorithm~\ref{alg2}, restoring Slater's condition without solving the FR auxiliary problem.
		
		\item \textbf{Time}: The total time in seconds required to perform facial reduction. For the standard FRA, this is the time to solve the FR auxiliary problem. For the primal FRA, we report two components: $T_1$ (time to construct a feasible solution) and $T_2$ (time to solve the reduced FR auxiliary problem).
		
		\item \textbf{Solver Status}: The status returned by MOSEK. \emph{Success} indicates that the SDP was solved correctly. \emph{Stall} denotes limited progress due to numerical issues, although the solution may still be usable. \emph{Fail} indicates that a reliable solution could not be obtained. \emph{Out of memory} means the problem could not be solved due to insufficient system memory.
		
		\item \textbf{Slater Condition}: \emph{Yes} indicates that the facially reduced formulation satisfies Slater’s condition. \emph{No} indicates that Slater’s condition was not verified. \rvv{We generate affinely independent feasible points one at a time by solving the randomized subproblem of Lemma~\ref{quad}, namely $\min/\max\{u^{\top} x \mid x \in P\}$ for a random direction $u \in H^{\perp}$. A candidate $x$ is accepted as affinely independent from the current points $x_0,\dots,x_k$ (with reference point $v$) when $|u^{\top}(x - v)| > 10^{-2}$, the numerical version of the test $u^{\top}(x-v)\neq 0$. When a maximal set of such points is obtained, the reduced face is minimal and Slater's condition holds; we record \emph{Yes}, solve no auxiliary problem, and the \textbf{Aux.~Size} entry is zero. When maximality cannot be certified, we solve the FR auxiliary problem $L^{\perp}\cap G^{\triangle}$, in which the exposing vector is normalized to have unit trace. If MOSEK certifies this problem infeasible, no exposing vector exists, Slater's condition holds, and we record \emph{Yes}. If instead a feasible exposing vector is found, the face can be reduced further but strict feasibility is not certified in this single step, and we record \emph{No}; the reported rank of this exposing vector is obtained by counting eigenvalues above $10^{-5}$, but the \emph{Yes}/\emph{No} verdict itself relies only on MOSEK's infeasibility certificate. When no reliable solution is available, due to solver failure or insufficient memory, the entry is marked \emph{N.A.} A stalled run (status \texttt{MSK\_RES\_TRM\_STALL}) is still classified by the rules above, using the problem status MOSEK returns, with no additional verification; in our experiments no stalled solve produced a \emph{No}. Of the $145$ instances for which the primal FRA restored Slater's condition, $115$ were certified from sampling alone and the remaining $30$ via infeasibility of the reduced auxiliary problem.}
	\end{enumerate}

	The results demonstrate that the primal FRA substantially improves computational efficiency over the standard FRA. In nearly all cases, the size of the FR auxiliary problem is significantly reduced under the primal FRA, indicating more effective facial reduction. Moreover, the total runtime is consistently lower, with the overhead of the primal FRA (i.e., computing a feasible solution) being negligible relative to solving the reduced FR auxiliary problem.
	
	A further key advantage is improved numerical robustness. Several instances that failed under the standard FRA due to numerical instability were successfully handled by the primal FRA. In particularly challenging cases where the standard FRA did not restore Slater’s condition, the primal FRA succeeded, improving solver reliability and ensuring strong duality.
	
	Overall, the empirical evidence strongly supports the effectiveness of the primal FRA in reducing auxiliary problem complexity, enhancing numerical stability, and restoring Slater’s condition—while incurring minimal computational overhead.

	\begin{landscape}

	\begin{table}[H]
		\centering
		\small
		\begin{tabular}{c|cccc|ccccccc} \hline
			\multirow{2}{*}{Name} & \multicolumn{4}{c|}{The standard FRA} &  \multicolumn{6}{c}{The primal FRA}  \\  \cline{2-11}
			& Aux.  Size & Time & Solver Status& Slater condition &Aux. Size  & $T_{1}$ & $T_{2}$ & Time & Solver Status& Slater condition   \\\hline
			gr4x6                 &    49        &    0.25      &    Success         &    No          &    0         &    0.21      &    0.00      &    0.21      &    Success         &    Yes         \\ 
			markshare-5-0         &    46        &    0.28      &    Success         &    No          &    0         &    0.28      &    0.00      &    0.28      &    Success         &    Yes         \\ 
			markshare-4-0         &    35        &    0.36      &    Success         &    No          &    0         &    1.18      &    0.00      &    1.18      &    Success         &    Yes         \\ 
			markshare1            &    63        &    1.08      &    Success         &    No          &    0         &    5.40      &    0.00      &    5.40      &    Success         &    Yes         \\ 
			neos5                 &    64        &    1.21      &    Success         &    Yes         &    0         &    0.48      &    0.00      &    0.48      &    Success         &    Yes         \\ 
			markshare2            &    75        &    2.23      &    Success         &    No          &    0         &    0.48      &    0.00      &    0.48      &    Success         &    Yes         \\ 
			pk1                   &    87        &    3.23      &    Success         &    No          &    0         &    0.86      &    0.00      &    0.86      &    Success         &    Yes         \\ 
			b-ball                &    101       &    5.61      &    Success         &    No          &    0         &    0.56      &    0.00      &    0.56      &    Success         &    Yes         \\ 
			neos-5192052-neckar   &    181       &    51.29     &    Success         &    No          &    0         &    2.03      &    0.00      &    2.03      &    Success         &    Yes         \\ 
			v150d30-2hopcds       &    151       &    54.32     &    Success         &    No          &    0         &    11.36     &    0.00      &    11.36     &    Success         &    Yes         \\ 
			mas74                 &    152       &    112.76    &    Success         &    Yes         &    0         &    6.02      &    0.00      &    6.02      &    Success         &    Yes         \\ 
			mas76                 &    152       &    122.36    &    Success         &    Yes         &    0         &    2.45      &    0.00      &    2.45      &    Success         &    Yes         \\ 
			gsvm2rl3              &    242       &    158.02    &    Success         &    Yes         &    0         &    6.95      &    0.00      &    6.95      &    Success         &    Yes         \\ 
			assign1-5-8           &    157       &    171.03    &    Success         &    No          &    0         &    4.81      &    0.00      &    4.81      &    Success         &    Yes         \\ 
			2club200v15p5scn      &    201       &    192.38    &    Success         &    Yes         &    0         &    12.77     &    0.00      &    12.77     &    Success         &    Yes         \\ 
			neos-5140963-mincio   &    197       &    250.46    &    Success         &    No          &    28        &    114.24    &    4.44      &    118.68    &    Success         &    No          \\ 
			glass-sc              &    215       &    320.04    &    Success         &    Yes         &    0         &    13.71     &    0.00      &    13.71     &    Success         &    Yes         \\ 
			iis-glass-cov         &    215       &    330.77    &    Success         &    Yes         &    0         &    13.50     &    0.00      &    13.50     &    Success         &    Yes         \\ 
			mad                   &    221       &    479.20    &    Success         &    No          &    0         &    3.16      &    0.00      &    3.16      &    Success         &    Yes         \\ 
			neos-3754480-nidda    &    254       &    481.36    &    Success         &    Yes         &    0         &    41.20     &    0.00      &    41.20     &    Success         &    Yes         \\ 
			p0201                 &    202       &    498.13    &    Failed          &    N.A.        &    62        &    3.29      &    32.25     &    35.53     &    Failed          &    N.A.        \\ 
			prod1                 &    251       &    1027.85   &    Success         &    No          &    44        &    28.77     &    6.54      &    35.31     &    Success         &    No          \\ 
			prod2                 &    302       &    2117.50   &    Success         &    No          &    39        &    60.37     &    8.64      &    69.01     &    Success         &    No          \\ 
			supportcase14         &    305       &    4509.23   &    Success         &    No          &    201       &    7.15      &    548.62    &    555.77    &    Success         &    No          \\ 
			\hline 
		\end{tabular}
		\caption{Algorithm performance on the DNN relaxation.}
		\label{tab:dnn}
	\end{table}
	\begin{table}[H]
		\centering
		\small
		\scalebox{0.86}{%
		\begin{tabular}{c|cccc|ccccccc} \hline
			\multirow{2}{*}{Name} & \multicolumn{4}{c|}{The standard FRA} &  \multicolumn{6}{c}{The primal FRA}  \\  \cline{2-11}
			&Aux. Size & Time & Solver Status& Slater condition & Aux. Size  & $T_{1}$ & $T_{2}$ & Time & Solver Status& Slater condition   \\\hline
			markshare-4-0         &    35        &    0.06      &    Success         &    Yes         &    0         &    1.21      &    0.00      &    1.21      &    Success         &    Yes         \\ 
			markshare-5-0         &    46        &    0.09      &    Success         &    Yes         &    0         &    0.32      &    0.00      &    0.32      &    Success         &    Yes         \\ 
			gr4x6                 &    49        &    0.13      &    Success         &    Yes         &    0         &    0.24      &    0.00      &    0.24      &    Success         &    Yes         \\ 
			neos5                 &    64        &    0.29      &    Success         &    Yes         &    0         &    0.48      &    0.00      &    0.48      &    Success         &    Yes         \\ 
			markshare1            &    63        &    0.30      &    Success         &    Yes         &    0         &    5.43      &    0.00      &    5.43      &    Success         &    Yes         \\ 
			markshare2            &    75        &    0.57      &    Success         &    Yes         &    0         &    0.49      &    0.00      &    0.49      &    Success         &    Yes         \\ 
			pk1                   &    87        &    1.27      &    Success         &    Yes         &    0         &    0.88      &    0.00      &    0.88      &    Success         &    Yes         \\ 
			b-ball                &    101       &    2.74      &    Success         &    Yes         &    0         &    0.59      &    0.00      &    0.59      &    Success         &    Yes         \\ 
			mas76                 &    152       &    12.16     &    Success         &    Yes         &    0         &    2.49      &    0.00      &    2.49      &    Success         &    Yes         \\ 
			v150d30-2hopcds       &    151       &    13.33     &    Success         &    No          &    0         &    11.37     &    0.00      &    11.37     &    Success         &    Yes         \\ 
			mas74                 &    152       &    13.39     &    Success         &    Yes         &    0         &    6.00      &    0.00      &    6.00      &    Success         &    Yes         \\ 
			assign1-5-8           &    157       &    25.12     &    Stall           &    Yes         &    0         &    4.94      &    0.00      &    4.94      &    Success         &    Yes         \\ 
			neos-5192052-neckar   &    181       &    30.52     &    Success         &    Yes         &    0         &    2.03      &    0.00      &    2.03      &    Success         &    Yes         \\ 
			neos-5140963-mincio   &    197       &    42.30     &    Success         &    Yes         &    28        &    113.99    &    1.26      &    115.25    &    Success         &    Yes         \\ 
			p0201                 &    202       &    43.02     &    Success         &    Yes         &    62        &    2.90      &    7.83      &    10.73     &    Success         &    Yes         \\ 
			2club200v15p5scn      &    201       &    53.26     &    Success         &    Yes         &    0         &    12.74     &    0.00      &    12.74     &    Success         &    Yes         \\ 
			iis-glass-cov         &    215       &    58.69     &    Success         &    Yes         &    0         &    13.53     &    0.00      &    13.53     &    Success         &    Yes         \\ 
			glass-sc              &    215       &    67.97     &    Success         &    Yes         &    0         &    13.91     &    0.00      &    13.91     &    Success         &    Yes         \\ 
			mad                   &    221       &    72.56     &    Success         &    Yes         &    0         &    2.67      &    0.00      &    2.67      &    Success         &    Yes         \\ 
			gsvm2rl3              &    242       &    124.86    &    Success         &    Yes         &    0         &    7.29      &    0.00      &    7.29      &    Success         &    Yes         \\ 
			prod1                 &    251       &    155.15    &    Success         &    Yes         &    44        &    29.21     &    2.34      &    31.55     &    Failed          &    N.A.        \\ 
			supportcase14         &    305       &    354.02    &    Success         &    No          &    201       &    7.55      &    116.64    &    124.19    &    Success         &    No          \\ 
			neos-3754480-nidda    &    254       &    415.66    &    Failed          &    N.A.        &    0         &    41.66     &    0.00      &    41.66     &    Success         &    Yes         \\ 
			prod2                 &    302       &    443.06    &    Success         &    Yes         &    39        &    60.52     &    2.43      &    62.95     &    Success         &    Yes         \\ 
			supportcase16         &    320       &    464.48    &    Success         &    No          &    217       &    8.53      &    137.82    &    146.35    &    Success         &    No          \\ 
			iis-hc-cov            &    298       &    475.61    &    Failed          &    N.A.        &    0         &    18.63     &    0.00      &    18.63     &    Success         &    Yes         \\ 
			neos-1430701          &    313       &    569.62    &    Success         &    Yes         &    0         &    5.14      &    0.00      &    5.14      &    Success         &    Yes         \\ 
			probportfolio         &    321       &    675.83    &    Success         &    Yes         &    276       &    102.37    &    255.10    &    357.47    &    Success         &    Yes         \\ 
			ran13x13              &    339       &    992.48    &    Success         &    Yes         &    0         &    9.76      &    0.00      &    9.76      &    Success         &    Yes         \\ 
			control30-3-2-3       &    333       &    1250.47   &    Failed          &    N.A.        &    171       &    345.81    &    338.36    &    684.17    &    Stall           &    Yes         \\ 
			pigeon-08             &    345       &    1378.78   &    Success         &    Yes         &    152       &    231.30    &    47.44     &    278.74    &    Success         &    Yes         \\ 
			nsa                   &    389       &    N.A.      &    Out of Memory   &    N.A.        &    96        &    123.16    &    27.10     &    150.27    &    Failed          &    N.A.        \\ 
			gsvm2rl5              &    402       &    N.A.      &    Out of Memory   &    N.A.        &    0         &    41.24     &    0.00      &    41.24     &    Success         &    Yes         \\ 
			neos-3611689-kaihu    &    422       &    N.A.      &    Out of Memory   &    N.A.        &    0         &    14.37     &    0.00      &    14.37     &    Success         &    Yes         \\ 
			neos-3610040-iskar    &    431       &    N.A.      &    Out of Memory   &    N.A.        &    0         &    13.51     &    0.00      &    13.51     &    Success         &    Yes         \\ 
			supportcase26         &    437       &    N.A.      &    Out of Memory   &    N.A.        &    115       &    441.93    &    200.76    &    642.69    &    Success         &    Yes         \\ 
			rlp1                  &    462       &    N.A.      &    Out of Memory   &    N.A.        &    0         &    12.09     &    0.00      &    12.09     &    Success         &    Yes         \\ 
			neos-3611447-jijia    &    473       &    N.A.      &    Out of Memory   &    N.A.        &    0         &    17.70     &    0.00      &    17.70     &    Success         &    Yes         \\ 
			k16x240b              &    481       &    N.A.      &    Out of Memory   &    N.A.        &    0         &    17.60     &    0.00      &    17.60     &    Success         &    Yes         \\ 
			nexp-50-20-1-1        &    491       &    N.A.      &    Out of Memory   &    N.A.        &    0         &    21.41     &    0.00      &    21.41     &    Success         &    Yes         \\ 
			pigeon-10             &    491       &    N.A.      &    Out of Memory   &    N.A.        &    230       &    480.00    &    229.21    &    709.21    &    Success         &    Yes         \\ 
			ran12x21              &    505       &    N.A.      &    Out of Memory   &    N.A.        &    0         &    22.61     &    0.00      &    22.61     &    Success         &    Yes         \\ 
			ran14x18-disj-8       &    505       &    N.A.      &    Out of Memory   &    N.A.        &    0         &    46.34     &    0.00      &    46.34     &    Success         &    Yes         \\ 
			\hline 
		\end{tabular}
		}
		\caption{Algorithm performance on the Shor's relaxation.}
		\label{tab:shor}
	\end{table}
	\begin{table}[H]
		\centering
		\small
		\scalebox{0.78}{%
		\begin{tabular}{c|cccc|ccccccc} \hline
			\multirow{2}{*}{Name} & \multicolumn{4}{c|}{The standard FRA} &  \multicolumn{6}{c}{The primal FRA}  \\  \cline{2-11}
			& Aux.  Size & Time & Solver Status& Slater condition & Aux. Size  & $T_{1}$ & $T_{2}$ & Time & Solver Status& Slater condition   \\\hline
			neos-5192052-neckar   &    25        &    0.02      &    Success         &    Yes         &    0         &    0.09      &    0.00      &    0.09      &    Success         &    Yes         \\ 
			gr4x6                 &    25        &    0.03      &    Success         &    Yes         &    0         &    0.21      &    0.00      &    0.21      &    Success         &    Yes         \\ 
			markshare-4-0         &    31        &    0.03      &    Success         &    Yes         &    0         &    0.11      &    0.00      &    0.11      &    Success         &    Yes         \\ 
			bienst1               &    29        &    0.04      &    Success         &    Yes         &    0         &    0.18      &    0.00      &    0.18      &    Success         &    Yes         \\ 
			markshare-5-0         &    41        &    0.07      &    Success         &    Yes         &    0         &    0.17      &    0.00      &    0.17      &    Success         &    Yes         \\ 
			bienst2               &    36        &    0.07      &    Success         &    Yes         &    0         &    0.21      &    0.00      &    0.21      &    Success         &    Yes         \\ 
			nsa                   &    37        &    0.08      &    Success         &    Yes         &    0         &    0.41      &    0.00      &    0.41      &    Success         &    Yes         \\ 
			markshare1            &    51        &    0.12      &    Success         &    Yes         &    0         &    0.22      &    0.00      &    0.22      &    Success         &    Yes         \\ 
			osorio-cta            &    8         &    0.14      &    Success         &    Yes         &    0         &    0.29      &    0.00      &    0.29      &    Success         &    Yes         \\ 
			neos5                 &    54        &    0.15      &    Success         &    Yes         &    0         &    0.37      &    0.00      &    0.37      &    Success         &    Yes         \\ 
			pk1                   &    56        &    0.17      &    Success         &    Yes         &    0         &    0.24      &    0.00      &    0.24      &    Success         &    Yes         \\ 
			qiu                   &    49        &    0.17      &    Success         &    Yes         &    0         &    0.60      &    0.00      &    0.60      &    Success         &    Yes         \\ 
			newdano               &    57        &    0.24      &    Success         &    Yes         &    0         &    0.31      &    0.00      &    0.31      &    Success         &    Yes         \\ 
			markshare2            &    61        &    0.25      &    Success         &    Yes         &    0         &    0.29      &    0.00      &    0.29      &    Success         &    Yes         \\ 
			gsvm2rl3              &    61        &    0.28      &    Success         &    Yes         &    0         &    0.81      &    0.00      &    0.81      &    Success         &    Yes         \\ 
			fastxgemm-n2r7s4t1    &    57        &    0.30      &    Failed          &    N.A.        &    0         &    7.21      &    0.00      &    7.21      &    Success         &    Yes         \\ 
			neos-3754480-nidda    &    51        &    0.34      &    Stall           &    Yes         &    0         &    0.28      &    0.00      &    0.28      &    Success         &    Yes         \\ 
			fastxgemm-n2r6s0t2    &    49        &    0.42      &    Failed          &    N.A.        &    0         &    3.27      &    0.00      &    3.27      &    Success         &    Yes         \\ 
			istanbul-no-cutoff    &    31        &    0.45      &    Success         &    Yes         &    1         &    25.42     &    0.15      &    25.58     &    Success         &    Yes         \\ 
			neos-2978193-inde     &    65        &    0.50      &    Success         &    Yes         &    0         &    1.76      &    0.00      &    1.76      &    Success         &    Yes         \\ 
			danoint               &    57        &    0.55      &    Success         &    Yes         &    0         &    46.53     &    0.00      &    46.53     &    Success         &    Yes         \\ 
			neos-3610051-istra    &    77        &    0.83      &    Success         &    Yes         &    0         &    0.83      &    0.00      &    0.83      &    Success         &    Yes         \\ 
			neos-3610173-itata    &    78        &    0.96      &    Success         &    Yes         &    0         &    0.64      &    0.00      &    0.64      &    Success         &    Yes         \\ 
			neos-5100895-inster   &    57        &    0.97      &    Failed          &    N.A.        &    34        &    61.82     &    0.74      &    62.57     &    Stall           &    Yes         \\ 
			dcmulti               &    76        &    1.07      &    Stall           &    Yes         &    6         &    1.08      &    0.02      &    1.10      &    Stall           &    Yes         \\ 
			neos-807639           &    81        &    1.25      &    Success         &    Yes         &    51        &    10.28     &    0.37      &    10.65     &    Success         &    Yes         \\ 
			neos-3610040-iskar    &    86        &    1.26      &    Success         &    Yes         &    0         &    0.78      &    0.00      &    0.78      &    Success         &    Yes         \\ 
			neos-3611447-jijia    &    86        &    1.31      &    Success         &    Yes         &    0         &    0.74      &    0.00      &    0.74      &    Success         &    Yes         \\ 
			b-ball                &    89        &    1.32      &    Success         &    Yes         &    0         &    0.60      &    0.00      &    0.60      &    Success         &    Yes         \\ 
			neos-3611689-kaihu    &    89        &    1.57      &    Success         &    Yes         &    0         &    0.80      &    0.00      &    0.80      &    Success         &    Yes         \\ 
			gsvm2rl5              &    101       &    2.04      &    Success         &    Yes         &    0         &    2.20      &    0.00      &    2.20      &    Success         &    Yes         \\ 
			rmatr100-p10          &    101       &    2.41      &    Success         &    Yes         &    0         &    2.99      &    0.00      &    2.99      &    Success         &    Yes         \\ 
			rmatr100-p5           &    101       &    2.52      &    Success         &    Yes         &    0         &    4.43      &    0.00      &    4.43      &    Success         &    Yes         \\ 
			pg                    &    101       &    2.61      &    Success         &    Yes         &    0         &    3.33      &    0.00      &    3.33      &    Success         &    Yes         \\ 
			pg5-34                &    101       &    2.63      &    Success         &    Yes         &    0         &    1.18      &    0.00      &    1.18      &    Success         &    Yes         \\ 
			control30-3-2-3       &    91        &    2.79      &    Failed          &    N.A.        &    0         &    0.94      &    0.00      &    0.94      &    Success         &    Yes         \\ 
			mod011                &    97        &    2.92      &    Stall           &    Yes         &    0         &    4.84      &    0.00      &    4.84      &    Success         &    Yes         \\ 
			app3                  &    101       &    3.79      &    Stall           &    Yes         &    4         &    3.93      &    0.04      &    3.97      &    Stall           &    Yes         \\ 
			milo-v13-4-3d-3-0     &    121       &    5.70      &    Success         &    Yes         &    94        &    130.06    &    10.84     &    140.90    &    Success         &    Yes         \\ 
			assign1-5-8           &    131       &    6.24      &    Success         &    Yes         &    0         &    0.86      &    0.00      &    0.86      &    Success         &    Yes         \\ 
			neos-1396125          &    130       &    7.52      &    Success         &    Yes         &    45        &    132.02    &    5.60      &    137.62    &    Success         &    Yes         \\ 
			neos-3660371-kurow    &    145       &    9.08      &    Success         &    Yes         &    44        &    44.62     &    1.89      &    46.50     &    Success         &    Yes         \\ 
			mas74                 &    151       &    10.76     &    Success         &    Yes         &    0         &    0.86      &    0.00      &    0.86      &    Success         &    Yes         \\ 
			v150d30-2hopcds       &    151       &    12.77     &    Success         &    No          &    0         &    12.48     &    0.00      &    12.48     &    Success         &    Yes         \\ 
			prod1                 &    150       &    12.77     &    Success         &    Yes         &    25        &    9.69      &    0.21      &    9.89      &    Success         &    Yes         \\ 
			\hline 
		\end{tabular}
		}
		\caption{Algorithm performance on the variant of Shor’s relaxation (1).}
		\label{tab:vshor1}
	\end{table}

	\begin{table}[H]
		\centering
		\small
		\scalebox{0.78}{%
		\begin{tabular}{c|cccc|ccccccc} \hline
			\multirow{2}{*}{Name} & \multicolumn{4}{c|}{The standard FRA} &  \multicolumn{6}{c}{The primal FRA}  \\  \cline{2-11}
			& Aux. Size & Time & Solver Status& Slater condition &Aux. Size  & $T_{1}$ & $T_{2}$ & Time & Solver Status& Slater condition   \\\hline
			mas76                 &    151       &    12.87     &    Success         &    Yes         &    0         &    0.91      &    0.00      &    0.91      &    Success         &    Yes         \\ 
			neos-1430701          &    157       &    15.42     &    Success         &    Yes         &    0         &    0.98      &    0.00      &    0.98      &    Success         &    Yes         \\ 
			milo-v13-4-3d-4-0     &    161       &    17.15     &    Success         &    Yes         &    127       &    170.50    &    33.47     &    203.97    &    Success         &    Yes         \\ 
			ran13x13              &    170       &    21.13     &    Success         &    Yes         &    0         &    1.66      &    0.00      &    1.66      &    Success         &    Yes         \\ 
			neos-5140963-mincio   &    184       &    28.53     &    Success         &    Yes         &    28        &    19.68     &    1.39      &    21.07     &    Success         &    Yes         \\ 
			binkar10-1            &    171       &    29.58     &    Success         &    Yes         &    0         &    6.02      &    0.00      &    6.02      &    Success         &    Yes         \\ 
			neos-480878           &    190       &    40.88     &    Success         &    Yes         &    0         &    4.06      &    0.00      &    4.06      &    Success         &    Yes         \\ 
			gsvm2rl9              &    201       &    43.17     &    Success         &    Yes         &    0         &    19.21     &    0.00      &    19.21     &    Success         &    Yes         \\ 
			neos-3372571-onahau   &    186       &    50.14     &    Success         &    Yes         &    75        &    129.53    &    2.55      &    132.08    &    Success         &    Yes         \\ 
			prod2                 &    201       &    50.34     &    Success         &    Yes         &    24        &    26.39     &    0.58      &    26.97     &    Success         &    Yes         \\ 
			2club200v15p5scn      &    201       &    51.92     &    Success         &    Yes         &    0         &    12.79     &    0.00      &    12.79     &    Success         &    Yes         \\ 
			a2c1s1                &    193       &    53.84     &    Success         &    Yes         &    0         &    51.63     &    0.00      &    51.63     &    Success         &    Yes         \\ 
			rmatr200-p20          &    201       &    60.34     &    Success         &    Yes         &    0         &    44.75     &    0.00      &    44.75     &    Success         &    Yes         \\ 
			p0201                 &    202       &    61.80     &    Stall           &    Yes         &    62        &    2.60      &    6.84      &    9.44      &    Stall           &    Yes         \\ 
			glass-sc              &    215       &    61.81     &    Success         &    Yes         &    0         &    13.73     &    0.00      &    13.73     &    Success         &    Yes         \\ 
			a1c1s1                &    193       &    63.23     &    Stall           &    Yes         &    0         &    98.01     &    0.00      &    98.01     &    Success         &    Yes         \\ 
			van                   &    193       &    63.39     &    Success         &    Yes         &    0         &    192.59    &    0.00      &    192.59    &    Success         &    Yes         \\ 
			mad                   &    201       &    64.73     &    Failed          &    N.A.        &    0         &    2.09      &    0.00      &    2.09      &    Success         &    Yes         \\ 
			iis-glass-cov         &    215       &    65.10     &    Success         &    Yes         &    0         &    13.49     &    0.00      &    13.49     &    Success         &    Yes         \\ 
			bg512142              &    241       &    107.76    &    Success         &    No          &    0         &    15.50     &    0.00      &    15.50     &    Success         &    Yes         \\ 
			k16x240b              &    241       &    111.30    &    Success         &    Yes         &    0         &    13.37     &    0.00      &    13.37     &    Success         &    Yes         \\ 
			exp-1-500-5-5         &    251       &    141.01    &    Success         &    Yes         &    0         &    4.73      &    0.00      &    4.73      &    Success         &    Yes         \\ 
			neos-2629914-sudost   &    257       &    144.15    &    Success         &    Yes         &    0         &    10.33     &    0.00      &    10.33     &    Success         &    Yes         \\ 
			ran12x21              &    253       &    146.42    &    Success         &    Yes         &    0         &    4.09      &    0.00      &    4.09      &    Success         &    Yes         \\ 
			nexp-50-20-1-1        &    246       &    149.11    &    Success         &    No          &    0         &    3.99      &    0.00      &    3.99      &    Success         &    Yes         \\ 
			ran14x18-disj-8       &    253       &    161.37    &    Success         &    Yes         &    0         &    6.32      &    0.00      &    6.32      &    Success         &    Yes         \\ 
			bc1                   &    253       &    199.75    &    Failed          &    N.A.        &    0         &    134.63    &    0.00      &    134.63    &    Success         &    Yes         \\ 
			pigeon-08             &    273       &    220.45    &    Success         &    Yes         &    152       &    20.96     &    26.41     &    47.37     &    Failed          &    N.A.        \\ 
			supportcase14         &    305       &    348.73    &    Success         &    No          &    201       &    7.23      &    116.14    &    123.37    &    Success         &    No          \\ 
			supportcase16         &    320       &    444.53    &    Success         &    No          &    217       &    8.05      &    139.63    &    147.69    &    Success         &    No          \\ 
			probportfolio         &    301       &    457.84    &    Success         &    Yes         &    165       &    278.27    &    44.37     &    322.64    &    Success         &    Yes         \\ 
			b2c1s1                &    289       &    466.11    &    Success         &    Yes         &    0         &    180.33    &    0.00      &    180.33    &    Success         &    Yes         \\ 
			neos17                &    301       &    493.37    &    Success         &    Yes         &    0         &    5.45      &    0.00      &    5.45      &    Success         &    Yes         \\ 
			iis-hc-cov            &    298       &    508.35    &    Failed          &    N.A.        &    0         &    18.16     &    0.00      &    18.16     &    Success         &    Yes         \\ 
			b1c1s1                &    289       &    525.80    &    Stall           &    Yes         &    0         &    169.96    &    0.00      &    169.96    &    Success         &    Yes         \\ 
			neos-3665875-lesum    &    321       &    597.45    &    Success         &    Yes         &    157       &    325.58    &    177.87    &    503.46    &    Success         &    Yes         \\ 
			sp150x300d            &    301       &    847.32    &    Stall           &    Yes         &    29        &    39.87     &    0.54      &    40.40     &    Stall           &    Yes         \\ 
			neos-5188808-nattai   &    289       &    963.47    &    Stall           &    Yes         &    188       &    292.83    &    550.53    &    843.37    &    Stall           &    Yes         \\ 
			r50x360               &    361       &    1000.67   &    Success         &    Yes         &    1         &    19.99     &    0.23      &    20.22     &    Success         &    Yes         \\ 
			tr12-30               &    361       &    1052.19   &    Success         &    Yes         &    8         &    32.87     &    0.29      &    33.16     &    Success         &    Yes         \\ 
			neos-1442119          &    365       &    1109.93   &    Success         &    Yes         &    0         &    7.24      &    0.00      &    7.24      &    Success         &    Yes         \\ 
			uct-subprob           &    380       &    2093.29   &    Stall           &    Yes         &    0         &    8.13      &    0.00      &    8.13      &    Success         &    Yes         \\ 
			supportcase26         &    397       &    N.A.      &    Out of Memory   &    N.A.        &    55        &    400.32    &    24.59     &    424.92    &    Success         &    Yes         \\ 
			pigeon-10             &    401       &    N.A.      &    Out of Memory   &    N.A.        &    230       &    177.04    &    103.54    &    280.58    &    Success         &    Yes         \\ 
			aflow30a              &    422       &    N.A.      &    Out of Memory   &    N.A.        &    0         &    87.06     &    0.00      &    87.06     &    Success         &    Yes         \\ 
			\hline 
		\end{tabular}
		}
		\caption{Algorithm performance on the variant of Shor’s relaxation (2).}
		\label{tab:vshor2}
	\end{table}
	
	\end{landscape}

	\section{Further Results on Solving the SDP Relaxation}\label{sdpnum}

	
	The primary focus of this work is the preprocessing stage—specifically, solving the FR auxiliary problem efficiently. The effectiveness of the proposed primal FRA in this regard has already been demonstrated in the previous section. In this subsection, we present supplementary numerical results comparing the DNN relaxation and its facially reduced formulation.
	
	While the advantages of solving facially reduced SDP relaxations are well established in the literature, this approach is not without criticism. Key concerns include potential loss of sparsity and the introduction of numerical issues during the reformulation process. These challenges often necessitate problem-specific strategies and fine-tuning to fully realize the benefits of facial reduction. Indeed, most numerical experiments on SDP methods are conducted on instances where SDP formulations are known to perform well and can be regularized into stable forms—e.g., the Max-Cut problem, maximum stable set, and quadratic assignment problem. A detailed treatment of such implementations for the benchmark instances in MIPLIB lies beyond the scope of this paper.
	
	Nevertheless, the supplementary results in Table~\ref{tab:sdp} empirically confirm that facial reduction enhances numerical stability in practice. When solving the original DNN relaxations, the solver encountered numerical difficulties or failed to return reliable solutions for \rvv{eight of the ten} instances. In contrast, the facially reduced DNN formulations exhibited greater robustness, successfully solving four of these cases. Although the problem size is reduced after facial reduction, the total runtime can sometimes increase due to the aforementioned loss of sparsity. Addressing this trade-off often requires customized solutions depending on the problem structure.
	
	Table~\ref{tab:sdp} also confirms that MBQP problems are significantly more expensive to solve than MILP problems. Thus, when applying the SDP approach to solve an MBQP, it is reasonable to rely on MILP solvers to help regularize the SDP relaxation. This is the key idea behind the primal FRA.

	Each row in Table~\ref{tab:sdp} contains the following information:
	
	\begin{enumerate}
		\item \textbf{Size}: the order of the matrix variable in the DNN relaxation.
		
		\item \textbf{L.B.}: the lower bound obtained from solving the relaxation. In theory, the original relaxation and its facially reduced counterpart are equivalent and should produce the same bound, assuming both are solved correctly. In practice, however, especially for the original formulation, failure to satisfy Slater's condition may impair bound quality.
		
		\item \textbf{Time}: the computational time required to solve the relaxation.
		
		\item \textbf{Solver Status}: solver status as reported by MOSEK, using the same categorization as in Table~\ref{tab:dnn}—\emph{Success}, \emph{Stall}, or \emph{Fail}.
		
		\item \textbf{U.B.}: the best upper bound obtained by Gurobi within a 600-second time limit.
		
		\item \textbf{Time (Gurobi)}: the computational time used by Gurobi.
		
		\item \textbf{Status (Gurobi)}: the termination status reported by Gurobi.
	\end{enumerate}

	\begin{table}[H]
		\centering
		\small
		\resizebox{\linewidth}{!}{%
		\begin{tabular}{c|cccc|cccc|ccc} \\\hline
			& \multicolumn{4}{c|}{DNN Relaxation} & \multicolumn{4}{c}{Facially Reduced DNN Relaxation} & \multicolumn{3}{|c}{Gurobi} \\ \cline{2-12}
			Name & Size & L.B. & Time & Status & Size & L.B. & Time & Status & U.B. & Time & Status \\ \hline 
			markshare-4-0         &    34        &    NA        &           0.14      &    Failed          &    31        &    30.46     &           1.33      &    Stall           &    65.85     &    91.29     &    Optimal   \\ 
			markshare-5-0         &    45        &    NA        &           0.22      &    Failed          &    41        &    NA        &           2.58      &    Failed          &    235.41    &    600.02    &    Time limit\\ 
			gr4x6                 &    48        &    2863.42   &           0.34      &    Stall           &    40        &    2863.83   &           0.77      &    Success         &    3022.17   &    1.39      &    Optimal   \\ 
			markshare1            &    62        &    NA        &           0.61      &    Failed          &    51        &    109.16    &           6.64      &    Stall           &    2097.72   &    924.47    &    Time limit\\ 
			markshare2            &    74        &    NA        &           1.76      &    Failed          &    61        &    178.57    &          32.84      &    Stall           &    9850.66   &    970.81    &    Time limit\\ 
			pk1                   &    86        &    NA        &           2.58      &    Failed          &    72        &    146.58    &          18.54      &    Success         &    4293.04   &    1005.76   &    Time limit\\ 
			b-ball                &    100       &    887.75    &          10.18      &    Stall           &    82        &    887.83    &          30.04      &    Success         &    986.56    &    978.20    &    Time limit\\ 
			assign1-5-8           &    156       &    2076.92   &          70.88      &    Stall           &    127       &    NA        &         683.04      &    Failed          &    16614.17  &    1031.83   &    Time limit\\ 
			neos-5192052-neckar   &    180       &    NA        &         100.61      &    Failed          &    161       &    NA        &         279.05      &    Failed          &    0.00      &    0.04      &    Optimal   \\ 
			neos-5140963-mincio   &    196       &    19.45     &         158.99      &    Stall           &    170       &    19.61     &        1782.07      &    Success         &    1546.53   &    2464.42   &    Time limit\\ 
			\hline 
		\end{tabular}
		}
		\caption{Comparison between the original DNN relaxation and its facially reduced formulation}
		\label{tab:sdp}
	\end{table}

\end{document}